\newcolumntype{V}{!{\vrule width 2pt}}
\numberwithin{equation}{section}
\theoremstyle{plain}
\newtheorem{theorem}{Theorem}[section]
\newtheorem{corollary}[theorem]{Corollary}
\theoremstyle{definition}
\newtheorem{Def}[theorem]{Definition}
\newtheorem{remark}[theorem]{Remark}
\newtheorem{?}[theorem]{Problem}
\newcommand{\R}{\mathcal{R}}
\newcommand{\PP}{\mathcal{P}}
\newcommand{\s}{\mathcal{S}}
\newcommand{\B}{\mathcal{B}}
\newcommand{\T}{\mathcal{T}}
\newcommand{\I}{\mathfrak{B}}
\newcommand{\Rev}{\mathrm{Rev}}
\def\DT{\mathfrak{DT}}
\def\iar{\mathsf{iar}}
\def\lir{\mathsf{lir}}
\def\comp{\mathsf{comp}}
\def\DES{\mathrm{DES}}
\begin{document}

\title[Bijective Schr\"oder recurrences]{Bijective recurrences concerning two Schr\"oder triangles}

\author[S. Fu]{Shishuo Fu}
\address[Shishuo Fu]{College of Mathematics and Statistics, Chongqing University, Huxi campus, Chongqing 401331, P.R. China}
\email{fsshuo@cqu.edu.cn}

\author[Y. Wang]{Yaling Wang}

\address[Yaling Wang]{College of Mathematics and Statistics, Chongqing University, Huxi campus, Chongqing 401331, P.R. China}
\email{wyl032021@163.com}

\date{\today}

\begin{abstract}
Let $r(n,k)$ (resp. $s(n,k)$) be the number of Schr\"oder paths (resp. little Schr\"oder paths) of length $2n$ with $k$ hills, and set $r(0,0)=s(0,0)=1$. We bijectively establish the following recurrence relations:
\begin{align*}
r(n,0)&=\sum\limits_{j=0}^{n-1}2^{j}r(n-1,j),\\
r(n,k)&=r(n-1,k-1)+\sum\limits_{j=k}^{n-1}2^{j-k}r(n-1,j),\quad 1\le k\le n,\\
s(n,0) &=\sum\limits_{j=1}^{n-1}2\cdot3^{j-1}s(n-1,j),\\
s(n,k) &=s(n-1,k-1)+\sum\limits_{j=k+1}^{n-1}2\cdot3^{j-k-1}s(n-1,j),\quad 1\le k\le n.
\end{align*}
The infinite lower triangular matrices $[r(n,k)]_{n,k\ge 0}$ and $[s(n,k)]_{n,k\ge 0}$, whose row sums produce the large and little Schr\"oder numbers respectively, are two Riordan arrays of Bell type. Hence the above recurrences can also be deduced from their $A$- and $Z$-sequences characterizations.

On the other hand, it is well-known that the large Schr\"oder numbers also enumerate separable permutations. This propelled us to reveal the connection with a lesser-known permutation statistic, called initial ascending run, whose distribution on separable permutations is shown to be given by $[r(n,k)]_{n,k\ge 0}$ as well.
\end{abstract}

\subjclass[2010]{}

\keywords{}

\maketitle


\section{Introduction}\label{sec1: intro}

A \emph{Schr\"oder path} of length $2n$ is a path from $(0,0)$ to $(2n,0)$ consisting of steps $(x,y)\rightarrow(x+1,y+1)$ called \emph{ups}, steps $(x,y)\rightarrow(x+1,y-1)$ called \emph{downs}, and steps $(x,y)\rightarrow(x+2,y)$ called \emph{horizontals}, that never travels below the $x$-axis. The \emph{height} of a step is taken to be the $y$-coordinate of its ending point. A \emph{little Schr\"oder path} is a Schr\"oder path without horizontals at height $0$. When it is more convenient, we will describe a path by a word composed of letters $U$ (for an up), $D$ (for a down) and $H$ (for a horizontal). By convention, we view the empty path as the only (little) Schr\"oder path of length zero. We denote the set of all Schr\"oder paths (resp. little Schr\"oder paths) as $\R$ (resp. $\s$). These two types of lattice paths are well known to be enumerated by the large and little Schr\"oder numbers $r(n)$ and $s(n)$, respectively. They are registered in Sloane's OEIS \cite{oeis} as
\begin{align*}
r(n) [\href{http://oeis.org/A006318}{A006318}]:& 1,2,6,22,90,394,\cdots, \text{ and } \\
s(n) [\href{http://oeis.org/A001003}{A001003}]:& 1,1,3,11,45,197,\cdots.
\end{align*}


A \emph{peak} in a Schr\"oder path is an up followed by a down, such as $(x,y)\rightarrow(x+1,y+1)\rightarrow(x+2,y)$, which we refer to as a peak at height $y+1$. In particular, a \emph{hill} is a peak at height $1$. For $0\le k\le n$, let $r(n,k)$ (resp. $s(n,k)$) be the number of Schr\"oder paths (resp. little Schr\"oder paths) of length $2n$ with $k$ hills. Set $r(0,0)=s(0,0)=1$ by convention. Then we have
\begin{align}
[r(n,k)]_{n,k\ge 0} &=
\begin{bmatrix}\label{sch-tri}
1 & 0 & 0 & 0 & 0 & \cdots\\
1 & 1 & 0 & 0 & 0 & \cdots\\
3 & 2 & 1 & 0 & 0 & \cdots\\
11 & 7 & 3 & 1 & 0 & \cdots\\
45 & 28 & 12 & 4 & 1 & \cdots\\
\vdots & \vdots & \vdots & \vdots & \vdots & \ddots
\end{bmatrix} \quad [\href{http://oeis.org/A104219}{A104219}],\\
[s(n,k)]_{n,k\ge 0} &=
\begin{bmatrix}\label{littlesch-tri}
1 & 0 & 0 & 0 & 0 & \cdots\\
0 & 1 & 0 & 0 & 0 & \cdots\\
2 & 0 & 1 & 0 & 0 & \cdots\\
6 & 4 & 0 & 1 & 0 & \cdots\\
26 & 12 & 6 & 0 & 1 & \cdots\\
\vdots & \vdots & \vdots & \vdots & \vdots & \ddots
\end{bmatrix} \quad [\href{http://oeis.org/A114709}{A114709}].
\end{align}

Alternative interpretations of \eqref{sch-tri} were given by Pergola and Sulanke \cite{PS}, using weighted lattice paths and bicolored parallelogram polyominoes. Moreover, note that \eqref{sch-tri} and \eqref{littlesch-tri} are two examples of \emph{Riordan arrays}, which are natural generalizations of the Pascal triangle. Riordan arrays have attracted considerable interests recently, see for example Shapiro-Getu-Woan-Woodson \cite{SGWW} and Shapiro \cite{Sha} for the study of them as a group, called \emph{the Riordan group}, Sprugnoli \cite{Spr} and Luz\'on-Merlini-Mor\'on-Sprugnoli \cite{LMMS} for their application to combinatorial sums and identities, Barcucci-Pergola-Pinzani-Rinaldi \cite{BPPR} and Deutsch-Ferrari-Rinaldi \cite{DFR} for their relation with succession rule and the ECO method, Rogers \cite{Rog}, Merlini-Rogers-Sprugnoli-Verri \cite{MRSV} and He-Sprugnoli \cite{HS} for an alternative characterization of Riordan arrays using $A$- and $Z$-sequences, and most recently Chen-Liang-Wang \cite{CLW}, Zhu \cite{Zhu} and Chen-Wang \cite{CW} for the combinatorial inequalities in Riordan arrays. We mostly follow Barry \cite{Bar} for the notations and terminologies concerning Rirodan arrays. Readers unfamiliar with Riordan arrays could also use Barry's book as a gentle introduction to the subject.  

Towards the end of this introduction, we briefly recall how to apply the $A$- and $Z$-sequences characterizations of \eqref{sch-tri} and \eqref{littlesch-tri} to derive the following recurrences.

\begin{theorem}\label{thm:Sch}
For any integers $n\ge 1$, we have
\begin{align}
r(n,0)&=\sum\limits_{j=0}^{n-1}2^{j}r(n-1,j),\label{sch-Z}\\
r(n,k)&=r(n-1,k-1)+\sum\limits_{j=k}^{n-1}2^{j-k}r(n-1,j),\quad 1\le k\le n. \label{sch-A}
\end{align}
\end{theorem}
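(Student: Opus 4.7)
The plan is to recognize $[r(n,k)]_{n,k\ge 0}$ as a concrete Riordan array and then invoke the $A$- and $Z$-sequence characterization, since \eqref{sch-A} and \eqref{sch-Z} are precisely the recurrences such sequences encode.

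First I would identify the Riordan pair $(d(z), h(z))$ by computing the bivariate generating function $F(x,z) := \sum_{n,k \ge 0} r(n,k)\, x^k z^n$. A non-empty Schr\"oder path begins with exactly one of three kinds of first blocks: a horizontal $H$ (weight $z$, no hill); a hill $UD$ (weight $xz$); or a taller arch $UQD$ with $Q$ a non-empty Schr\"oder path (weight $z(R(z)-1)$, where $R(z)$ denotes the large Schr\"oder series, and no hill is created). Each is followed by an arbitrary Schr\"oder path, which yields
$$F(x,z) = \frac{1}{1 - xz - zR(z)} = \frac{S(z)}{1 - x\cdot zS(z)}, \qquad S(z) := \frac{1}{1 - zR(z)}.$$
Since $S(z)$ is the little Schr\"oder series, this reads off $(d(z), h(z)) = (S(z), zS(z))$; in particular one recovers the identity $r(n,0) = s(n)$.

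Next I would extract the $A$- and $Z$-series from the defining equations $h(z) = zA(h(z))$ and $d(z) = d(0)/(1 - zZ(h(z)))$. The key algebraic move is to set $y := h(z) = zS(z)$ and combine this with the standard quadratic $2zS^2 - (1+z)S + 1 = 0$ to eliminate $z$ and $S$ in favor of $y$ alone. Substituting $zS = y$ collapses the quadratic to the \emph{linear} relation $S(2y-1) = y - 1$, giving
$$S = \frac{1-y}{1-2y}, \qquad z = \frac{y}{S} = \frac{y(1-2y)}{1-y},$$
and therefore
$$A(y) = \frac{h}{z} = S = \frac{1-y}{1-2y} = 1 + \sum_{i \ge 1} 2^{i-1} y^i, \qquad Z(y) = \frac{1 - 1/S}{z} = \frac{1}{1-2y} = \sum_{i \ge 0} 2^i y^i.$$
Thus $a_0 = 1$, $a_i = 2^{i-1}$ for $i \ge 1$, and $z_i = 2^i$.

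Finally, the generic Riordan recurrences $r(n,k) = \sum_{i \ge 0} a_i\, r(n-1, k-1+i)$ for $k \ge 1$ and $r(n,0) = \sum_{i \ge 0} z_i\, r(n-1, i)$, combined with the triangularity $r(n-1, j) = 0$ for $j > n-1$, immediately reproduce \eqref{sch-A} and \eqref{sch-Z} after the reindexing $j := k - 1 + i$. The main obstacle is the algebraic step of eliminating $z$ and $S$ in favor of $y = zS$; but the fortunate fact that $zS^2 = yS$ linearizes the quadratic, after which the two neat rational closed forms for $A(y)$ and $Z(y)$ drop out and encode exactly the powers of $2$ appearing in the statement.
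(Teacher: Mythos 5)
Your argument is correct, but it is not the route the paper takes for its designated proof of Theorem~\ref{thm:Sch}: the proof given in Section~\ref{sec:comb pf} is fully bijective, constructing explicit maps $\phi:\bigcup_{k}\R_{n-1,k}\times\B_k\to\R_{n,0}$ and $\Phi:\bigcup_{j}\R_{n-1,j}\times\B_{j-k}\to\R_{n,k}$ that flatten, reverse, and re-fold hills into basins, with the binary sequence $\mathbf{b}\in\B_k$ accounting for the factor $2^{j-k}$. What you propose is essentially the algebraic Riordan-array proof that the paper itself sketches at the end of the Introduction: there the authors identify the array as $(g(x),xg(x))$ with $g$ the little Schr\"oder series and read off $A(x)=\frac{1-x}{1-2x}$, $Z(x)=\frac{1}{1-2x}$ via $A(x)=x/\Rev(xg(x))$ and $Z(x)=(A(x)-1)/x$ for a Bell-type array; you instead derive the same $A$ and $Z$ directly from the defining functional equations $h=zA(h)$ and $d=d_0/(1-zZ(h))$, using the quadratic $2zS^2-(1+z)S+1=0$ and the pleasant linearization under $y=zS$. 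Both computations are sound and your first-block decomposition $F=1/(1-xz-zR)$ is a clean way to see that the array really is $(S,zS)$. What the algebraic route buys is brevity and generality (it is exactly what powers the $(u,v)$-weighted extension in the final section); what the paper's bijective route buys is the combinatorial content advertised in the title, and in particular the machinery that transfers to di-sk trees and yields the $\iar$ statistic on separable permutations in Section~\ref{sec:comb pf sep}, which the generating-function argument does not provide.
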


\begin{theorem}\label{thm:litSch}
We have $s(1,0)=0, s(1,1)=1$, and for any integers $n\ge 2$,
\begin{align}
s(n,0) &=\sum\limits_{j=1}^{n-1}2\cdot3^{j-1}s(n-1,j),\label{litsch-Z}\\
s(n,k) &=s(n-1,k-1)+\sum\limits_{j=k+1}^{n-1}2\cdot3^{j-k-1}s(n-1,j),\quad 1\le k\le n. \label{litsch-A}
\end{align}
\end{theorem}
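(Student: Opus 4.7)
The plan is to identify $[s(n,k)]_{n,k\ge 0}$ as a Bell-type Riordan array $(d(x), h(x))$, extract its $A$- and $Z$-sequences, and then read off the two recurrences directly from the defining identities
\[
s(n+1, k+1) = \sum_{j \ge 0} a_j\, s(n, k+j), \qquad s(n+1, 0) = \sum_{j \ge 0} z_j\, s(n, j).
\]

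First I would derive $d$ and $h$ combinatorially. A nonempty little Schr\"oder path decomposes uniquely as $U\alpha D\beta$, where $\alpha$ is an arbitrary Schr\"oder path (contributing a hill precisely when $\alpha$ is empty) and $\beta$ is a little Schr\"oder path. Translating this decomposition into generating functions with $t$ marking hills yields
\[
S(x, t) = \sum_{n, k \ge 0} s(n, k)\, x^n t^k = \frac{1}{1 - x\bigl(R(x) - 1 + t\bigr)},
\]
where $R(x) = \sum_{n \ge 0} r(n)\, x^n$. Reading the coefficient of $t^k$ reveals the column generating functions $\sum_n s(n,k) x^n = d(x)\, h(x)^k$ with
\[
d(x) = \frac{1}{1 - x(R(x) - 1)}, \qquad h(x) = x\, d(x),
\]
so the array is of Bell type, and what remains is to solve for $A$ and $Z$ in $h(x) = xA(h(x))$ and $d(x) = 1 + xZ(h(x))\, d(x)$.

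To pin down $A$ and $Z$, I would exploit the quadratic $R = 1 + xR + xR^2$. Writing $R - 1 = (h-x)/(xh)$ from $h = xd$ and substituting gives, after simplification, the quadratic
\[
(2x + 3)\, h(x)^2 - (3x + 1)\, h(x) + x = 0,
\]
which rearranges into $h = x\cdot(1-h)(1-2h)/(1-3h)$ and, with a little more algebra, into $(d-1)/(xd) = 2h/(1-3h)$. These exhibit
\[
A(x) = \frac{(1-x)(1-2x)}{1-3x} = 1 + \sum_{j \ge 2} 2 \cdot 3^{j-2}\, x^j, \qquad Z(x) = \frac{2x}{1-3x} = \sum_{j \ge 1} 2 \cdot 3^{j-1}\, x^j.
\]

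Finally, plugging these power-series expansions back into the $A$- and $Z$-sequence identities, re-indexing via $i = k + j$ and truncating via $s(n, i) = 0$ for $i > n$, reproduces \eqref{litsch-A} and \eqref{litsch-Z} verbatim; the base cases $s(1, 0) = 0$ and $s(1, 1) = 1$ are immediate from the unique length-$2$ little Schr\"oder path $UD$. The main obstacle I anticipate is the algebra in the second step: bridging the implicit quadratic for $h$ with the closed forms for $A$ and $Z$. Once the candidates are guessed (for example by reading $a_0, a_1, a_2$ and $z_1, z_2$ off the first few rows of the table), the verification reduces to a routine rational-function identity modulo the quadratic.
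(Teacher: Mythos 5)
Your proof is correct, but it is not the route the paper takes for this theorem. I checked your key identities: the decomposition $S(x,t)=\bigl(1-x(R(x)-1+t)\bigr)^{-1}$ is right (the initial $U\alpha D$ block of a nonempty little Schr\"oder path is a hill exactly when $\alpha=\emptyset$, and hills inside $\alpha$ sit at height $2$), the quadratic $(2x+3)h^2-(3x+1)h+x=0$ does follow from $R=1+xR+xR^2$ together with $R-1=(h-x)/(xh)$, and the closed forms $A(x)=(1-x)(1-2x)/(1-3x)$ and $Z(x)=2x/(1-3x)$ expand to precisely the coefficients $2\cdot3^{j-k-1}$ and $2\cdot3^{j-1}$ in \eqref{litsch-A} and \eqref{litsch-Z}. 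However, this is exactly the ``typical algebraic proof \dots using the theory of Riordan arrays'' that the authors sketch in the Introduction and then explicitly decline to carry out for the little Schr\"oder triangle (``The rest can be done analogously, and thus is omitted''); the stated purpose of Section~\ref{sec:comb pf} is to supply a bijective counterpart in the Sch\"utzenberger/Foata--Zeilberger/Sulanke spirit. The paper's actual proof constructs a bijection $\psi$ from pairs $(p,\mathbf{t})$, with $p\in\s_{n-1,j}$ and $\mathbf{t}$ one of the $2\cdot3^{j-1}$ ``almost ternary'' sequences in $\T_j$, onto hill-free little Schr\"oder paths of length $2n$, by flattening or reversing hills according to the ternary digits and then repairing the resulting path; there the coefficient $2\cdot3^{j-k-1}$ is realized as the cardinality $|\T_{j-k}|$ rather than as a power-series coefficient. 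Your approach buys brevity and mechanical verifiability (everything reduces to rational-function identities modulo one quadratic); the paper's buys a combinatorial explanation of the coefficients and explicit maps it reuses later for di-sk trees. Both are complete, valid proofs.
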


M.-P. Sch\"utzenberger advocated that ``every algebraic relation is to be
given a combinatorial counterpart and vice versa'' (see \cite{FZ}). This is even more true for recurrences like \eqref{sch-Z}--\eqref{litsch-A} with positive coefficients. For instance, Foata and Zeilberger \cite{FZ} utilized well-weighted binary trees, and then Sulanke \cite{Sul} used elevated Schr\"oder paths to give bijective proofs of the following recurrence:
\begin{align*}
3(2n+1)r(n)=(n+2)r(n+1)+(n-1)r(n-1), \text{ for $n\ge 1$.}
\end{align*}

In Section~\ref{sec:comb pf}, we present bijective proofs, in the style of Foata-Zeilberger and Sulanke, of the above two theorems. Next in Section~\ref{sec:comb pf sep}, we consider \emph{separable permutations} (definition postponed to Section~\ref{sec:comb pf sep}), which are enumerated by the large Schr\"oder numbers as well. This raises the following Problem~\ref{find stat}. We succeeded in finding such a statistic, called \emph{initial ascending run} and denoted as $\iar$, that answers part (i) of this problem. The proof relies on interpreting \eqref{sch-Z} and \eqref{sch-A} via \emph{di-sk trees}, certain labeled binary trees introduced in \cite{FLZ} to represent separable permutations. We end this paper with the discussion on a more general Riordan array $(g(u,v;x),xg(u,v;x))$ and some of its specializations, as well as some outlook on future research motivated by the permutation statistic $\iar$.

\begin{?}\label{find stat}
\begin{enumerate}[(i)]
\item Is there a statistic defined on the set of separable permutations, such that the enumeration of separable permutations refined by this statistic is given by $r(n,k)$?
\item If yes, find a bijection from Schr\"oder paths to separable permutations, that sends the number of hills to the statistic found in (i).
\end{enumerate}
\end{?}

We close this introduction by recalling the typical algebraic proof of Theorems \ref{thm:Sch} and \ref{thm:litSch} using the theory of Riordan arrays. The readers are assumed to know the basic definitions. We first show that \eqref{sch-tri} and \eqref{littlesch-tri} are indeed Riordan arrays. Let $\widetilde{\R}$ be the set of Schr\"oder paths without hills. Then we have
$$|\s|=|\widetilde{\R}|,$$
since we can bijectively map a Schr\"oder path without hills to a little Schr\"oder path, by transforming each horizontal at height $0$ to a hill. Therefore we see $r(n,0)=s(n)$, and their common generating function is well-known to be

$$g(x):=\sum_{n\ge 0}r(n,0)x^n=\sum_{n\ge 0}s(n)x^n=\frac{1+x-\sqrt{1-6x+x^2}}{4x}.$$

Now each Schr\"oder path $p$ with $k$ hills can be uniquely decomposed as $p=p_1UDp_2\cdots p_kUDp_{k+1}$, where each $p_i,1\le i\le k+1$ is a Schr\"oder path without hills (possibly empty). This decomposition leads to the generating function

$$x^kg(x)^{k+1}=g(x)\cdot(xg(x))^k.$$

So we see $[r(n,k)]_{n,k\ge 0}=(g(x),xg(x))$ is indeed a Riordan array, and in particular, an element of the Bell subgroup \cite[\S 2.3]{Sha}. Therefore we can compute the generating functions for its $A$- and $Z$-sequences:
\begin{align*}
A(x)&=\frac{x}{\Rev(xg(x))}=\frac{1-x}{1-2x}=1+x+2x^2+2^2x^3+\cdots, \text{ and}\\
Z(x)&=\frac{1}{x}(A(x)-1)=\frac{1}{1-2x}=1+2x+2^2x^2+2^3x^3+\cdots,
\end{align*}
which implies Theorem~\ref{thm:Sch} immediately. Here $\Rev(f(x))$ is the compositional inverse of $f(x)$. For more details behind these calculations, see for example \cite[\S 7.5]{Bar}. The proof of Theorem~\ref{thm:litSch} using Riorday array requires the following generating function for little Sch\"oder paths without hills (see \href{http://oeis.org/A114710}{A114710}):
$$\sum_{n\ge 0}s(n,0)x^n=\frac{2}{1+3x+\sqrt{1-6x+x^2}}.$$
The rest can be done analogously, and thus is omitted.

\section{Bijective proofs of Theorems~\ref{thm:Sch} and \ref{thm:litSch}}\label{sec:comb pf}
We begin by introducing further notions defined on Schr\"oder paths.
\begin{Def}\label{further notion}
A \emph{$k$-basin at height $y$} is a down followed by $k$ consecutive horizontals at height $y$ and then an up, such as $$(x,y+1)\rightarrow(x+1,y)\rightarrow(x+3,y)\rightarrow \cdots \rightarrow (x+2k+1,y)\rightarrow(x+2k+2,y+1).$$ In particular, a $0$-basin is called a \emph{valley}. A \emph{closure} is a down step at height $0$. For $0\le k\le n$, let $\R_{n,k}$ (resp. $\s_{n,k}$) be the set of Schr\"oder paths (resp. little Schr\"oder paths) of length $2n$ with $k$ hills. Recall that their cardinalities are
$$|\R_{n,k}\vert=r(n,k), \text{ and } |\s_{n,k}\vert=s(n,k)$$
respectively. We denote the concatenation of two paths $p_1$ and $p_2$ by the juxtaposition $p_1p_2$, and it will be abbreviated as the power $p_1^2$ if $p_1=p_2$. We denote the set of binary sequences of length $k$ as
$$
\B_k:=\{(b_1,b_2,\ldots,b_k):\text{$b_i=0$ or $1$}, 1\le i\le k\}, \text{ for } k\ge 1, \text{ and } \B_0=\{\emptyset\}.
$$
\end{Def}

\begin{proof}[Proof of Theorem \ref{thm:Sch}] To prove \eqref{sch-Z}, we first define a map $$\phi:\bigcup_{k=0}^{n-1}\R_{n-1,k}\times\B_k\rightarrow \R_{n,0}.$$ Then it suffices to show that $\phi$ is a bijection.

Given any Schr\"oder path $p\in\R_{n-1,k}$ and any sequence $\mathbf{b}=(b_1,\dots,b_k)\in\B_k$, we transform the pair $(p,\mathbf{b})$ into a path $q\in\R_{n,0}$ by the four steps described below. Then we set $\phi(p,\mathbf{b})=q$.
\begin{description}
\item[Step 1] If $p$ is entirely composed of horizontals (so necessarily $k=0$), possibly empty, then we put $q=pH$.

\item[Step 2] Otherwise, we locate the leftmost up and the rightmost down of $p$, decompose it as $p=H^aUp_1DH^b$, for some integers $a,b\ge 0$ and subpath $Up_1D\in\R_{n-1-a-b,k}$. Let $$\hat{p}=H^aUUp_1DDH^b.$$ If $k=0$, then we put $q=\hat{p}$.
\item[Step 3] Otherwise we have $k\ge 1$. Label the $k$ hills in $Up_1D$ from left to right as $\mathrm{pk}_1,\ldots,\mathrm{pk}_k$. For each $j,1\le j\le k$, if $b_j=0$ we do nothing. If $b_j=1$ we consider the following cases:
\begin{itemize}
\item If $j=1$ and $\mathrm{pk}_1$ is at the beginning of $Up_1D$, i.e., $p_1$ begins with a $D$. Then we ``flatten'' this hill: $UD\rightarrow H$.
\item If $j=k$ and $\mathrm{pk}_k$ is at the end of $Up_1D$, i.e., $p_1$ ends with a $U$. Then we ``flatten'' this hill: $UD\rightarrow H$.
\item In all the remaining cases, we ``reverse'' the hill $\mathrm{pk}_j$: $UD\rightarrow DU$.
\end{itemize}
Denote the modified path as $\hat{q}$. If $\hat{q}$ has no hills, then we put $q=\hat{q}$.
\item[Step 4] Otherwise, clearly all hills in $\hat{q}$ were produced by ``reversing'' the hills in $Up_1D$ from Step 3. We fix this by replacing $m$ consecutive hills $D(UD)^mU$ with an $m$-basin $DH^mU$. Denote the modified path as $q$.
\end{description}
In all cases, we see $q$ indeed has length $2n$ and no hills, thus $q\in\R_{n,0}$ and $\phi$ is well-defined.

Next, we show that $\phi$ is bijective by constructing its inverse. For each path $q\in\R_{n,0}$, we determine an integer $k, 0\le k\le n-1$, find a path $p\in\R_{n-1,k}$ and a binary sequence $\mathbf{b}\in\B_k$, by following the three steps described below. Then we set $\phi^{-1}(q)=(p,\mathbf{b})$.
\begin{description}
	\item[Step 1] If $q$ is entirely composed of horizontals, then $k=0$, $\mathbf{b}=\emptyset$, and we get $p$ by removing an $H$ from $q$.
	\item[Step 2] Otherwise, we locate the leftmost up and the rightmost down of $q$, and decompose it as $q=H^aUq_1DH^b$. Count collectively how many initial horizontal (i.e., when $q_1$ begins with an $H$), final horizontal (i.e., when $q_1$ ends with an $H$), hills of $q_1$, and basins at height $0$ do we have in $Uq_1D$. The total number is denoted as $k$, with each $m$-basin contributing $m+1$ to the sum. Let $$\hat{q}=H^aq_1H^b.$$ If $k=0$, then put $\mathbf{b}=\emptyset$ and $p=\hat{q}$.
	\item[Step 3] Otherwise, we screen $q_1$ from left to right, and let the index $j$ increase from $1$ to $k$. Whenever we encounter a hill of $q_1$, we set $b_j=0$ and do nothing with the path, otherwise we consider the following cases:
\begin{itemize}
\item If we encounter an initial horizontal (i.e., $q_1$ begins with an $H$), then set $b_1=1$ and replace this $H$ with $UD$.
\item If we encounter a final horizontal (i.e., $q_1$ ends with an $H$), then set $b_k=1$ and replace this $H$ with $UD$.
\item If we encounter an $m$-basin at height $-1$ of $q_1$, then set $b_j=b_{j+1}=\cdots=b_{j+m}=1$, and replace this $m$-basin $DH^mU$ with $(UD)^{m+1}$. Note that $m=0$ is possible and allowed.
\end{itemize}
Denote the final path as $p$.
\end{description}

In all cases, we see $p\in\R_{n-1,k}$ and $\mathbf{b}\in\B_k$, hence $\phi^{-1}$ is well-defined. It is routine to check that $\phi$ and $\phi^{-1}$ are indeed inverse to each other. An example of applying $\phi^{-1}$ can be found in Figure \ref{inv-phi}, where those steps in $q$ that have contributed to the total number $k=7$ are labeled.

The proof of \eqref{sch-A} is analogous, we begin by constructing a map
$$\Phi:\bigcup_{j=k-1}^{n-1}\R_{n-1,j}\times\B_{j-k}\rightarrow \R_{n,k}, \text{ where }\B_{-1}:=\{\emptyset\}.$$
For each pair $(p,\mathbf{b})\in\bigcup_{j=k-1}^{n-1}\R_{n-1,j}\times\B_{j-k}$, we find a path $q\in\R_{n,k}$ according to the following two cases. Then we set $\Phi((p,\mathbf{b}))=q$.
\begin{description}
\item[Case 1] If $j=k-1$, then we put $q=UDp$.
\item[Case 2] Otherwise $j\ge k$. We decompose $p$ as $p=p_1UDp_2$, where $UD$ is the $k$-th hill of $p$ counting {\bf from right to left}. Therefore $p_1$ has $j-k$ hills. We see that $\phi(p_1,\mathbf{b})$ is a Schr\"oder path without hills. Now we put $$q=\phi(p_1,\mathbf{b})UDp_2.$$
\end{description}
In both cases, we see $q\in\R_{n,k}$ so $\Phi$ is well-defined. Moreover, one observes that $q$ is derived from Case 1 if and only if it begins with $UD$, making it clear how we should define the inverse of $\Phi$. Consequently we see that $\Phi$ is bijective. The proof is now completed.
\end{proof}

\begin{figure}[htb]
\setlength {\unitlength} {5.8mm}
\begin{picture} (30,8)
\put(1,3){\line(1,0){2}}
\put(3,3){\line(1,0){2}}
\put(5,3){\line(1,1){1}}
\put(6,4){\line(1,0){2}}
\put(8,4){\line(1,1){1}}
\put(9,5){\line(1,0){2}}
\put(11,5){\line(1,-1){1}}
\put(12,4){\line(1,1){1}}
\put(13,5){\line(1,-1){1}}
\put(14,4){\line(1,-1){1}}
\put(15,3){\line(1,1){1}}
\put(16,4){\line(1,0){2}}
\put(18,4){\line(1,-1){1}}
\put(19,3){\line(1,0){2}}
\put(21,3){\line(1,0){2}}
\put(23,3){\line(1,1){1}}
\put(24,4){\line(1,1){1}}
\put(25,5){\line(1,-1){1}}
\put(26,4){\line(1,-1){1}}
\put(0,3){$q$:}
\put(6.8,4.1){$H$}
\put(12.1,4.6){$U$}
\put(13.5,4.6){$D$}
\put(24.1,4.6){$U$}
\put(25.5,4.6){$D$}
\put(13.8,3.1){$D$}
\put(15.6,3.1){$U$}
\put(17.8,3.1){$D$}
\put(19.8,3.1){$H$}
\put(21.8,3.1){$H$}
\put(23.6,3.1){$U$}
\put(1,0){\line(1,0){2}}
\put(3,0){\line(1,0){2}}
\put(5,0){\line(1,1){1}}
\put(6,1){\line(1,-1){1}}
\put(7,0){\line(1,1){1}}
\put(8,1){\line(1,0){2}}
\put(10,1){\line(1,-1){1}}
\put(11,0){\line(1,1){1}}
\put(12,1){\line(1,-1){1}}
\put(13,0){\line(1,1){1}}
\put(14,1){\line(1,-1){1}}
\put(15,0){\line(1,0){2}}
\put(17,0){\line(1,1){1}}
\put(18,1){\line(1,-1){1}}
\put(19,0){\line(1,1){1}}
\put(20,1){\line(1,-1){1}}
\put(21,0){\line(1,1){1}}
\put(22,1){\line(1,-1){1}}
\put(23,0){\line(1,1){1}}
\put(24,1){\line(1,-1){1}}
\put(0,0){$p$:}
\end{picture}
\caption{$\phi^{-1}(q)=(p,\mathbf{b})$ with $\mathbf{b}=(1,0,1,1,1,1,0)$.
\label{inv-phi}}
\end{figure}
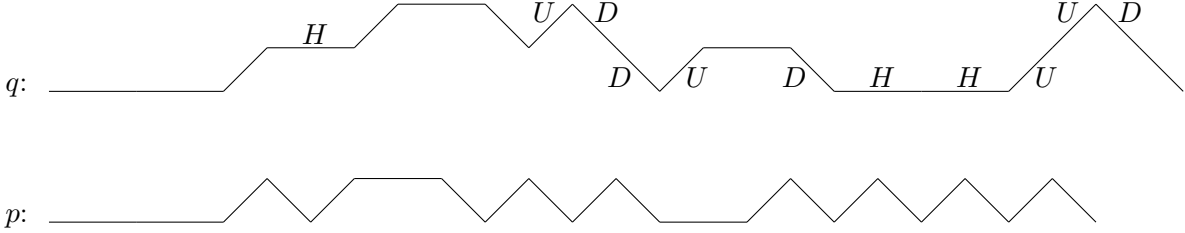



In order to deal with the factor $3$ appeared in equations \eqref{litsch-Z} and \eqref{litsch-A}, we need to introduce the following set of almost ternary sequences
$$
\T_k:=\{(t_1,t_2,\ldots,t_k):\text{$t_i=0,1$ or $2$}, 1\le i\le k-1, \text{ and } \text{$t_k=0$ or $1$} \}, \text{ for } k\ge 1.
$$

\begin{proof}[Proof of Theorem \ref{thm:litSch}] There is only one little Schr\"oder path of length $2$, namely $UD$, so $s(1,0)=0,s(1,1)=1$. Now suppose $n\ge 2$. To prove equation \eqref{litsch-Z}, it suffices to construct a bijection $$\psi:\bigcup_{k=1}^{n-1}\s_{n-1,k}\times\T_k\rightarrow \s_{n,0}.$$

Given any little Schr\"oder path $p\in\s_{n-1,k}$ and any sequence $\mathbf{t}=(t_1,\dots,t_k)\in\T_k$, we transform the pair $(p,\mathbf{t})$ into a path $q\in\s_{n,0}$ by the three steps described below. Then we set $\psi(p,\mathbf{t})=q$.

Since $k\geq 1$, we can uniquely decompose $p$ as $p=p_1UDp_2$, where $UD$ is the $k$-th hill of $p$ counting from left to right, $p_1$ is a little Schr\"oder path with $k-1$ hills, and $p_2$ is a little Schr\"oder path without hills. Label the $k$ hills in $p_1UD$ from left to right as $\mathrm{pk}_1,\ldots,\mathrm{pk}_k$.
\begin{description}
\item[Step 1] If $t_k=0$, let $$\hat{p}=UUp_1DDp_2.$$ For other $j,1\le j\le k-1$, if $t_j=0$ we do nothing. If $t_j\neq0$ we consider the following two cases:
\begin{itemize}
\item If $t_j=1$, then we ``flatten'' the hill $\mathrm{pk}_j$: $UD\rightarrow H$.
\item If $t_j=2$, then we ``reverse'' the hill $\mathrm{pk}_j$: $UD\rightarrow DU$.
\end{itemize}
Denote the modified path as $q$. In this case, there exist no horizontals at height $1$ to the left of the first closure (see Definition~\ref{further notion}), counting from left to right.
\item[Step 2] Otherwise we have $t_k=1$, let $$\hat{p}=UHp_1Dp_2.$$ For each $j,1\le j\le k-1$, we do the same operation as in Step 1, and denote the modified path as $\hat{q}$. If $\mathbf{t}$ has no $2$ in it, then we put $q=\hat{q}$. In this case, the $H$ after the first $U$ is the first horizontal at height $1$, and the $D$ before $p_2$ is the first closure.
\item[Step 3] Otherwise, each $t_j=2$ will create a new closure. We can uniquely decompose $\hat{q}$ as
$$\hat{q}=UHq_1DUq_2Dp_2,$$
where $D$ is the first closure, $q_{1}$ is a Schr\"oder path, and $Uq_2D$ is a little Schr\"oder path. Then we take $$q=UUq_2DHq_1Dp_2.$$
In this case, the $H$ before $q_1$ is the first horizontal at height $1$, and the $D$ before $p_2$ is the first closure.
\end{description}

In all cases, we see $q$ is a little Schr\"oder path of length $2n$ having no hills, thus $q\in\s_{n,0}$ and $\psi$ is well-defined.

Next, we construct the inverse of $\psi$. For each path $q\in\s_{n,0}$, we determine an integer $k, 1\le k\le n-1$, find a path $p\in\s_{n-1,k}$ and a sequence $\mathbf{t}\in\T_k$, by following the three steps described below. Then we set $\psi^{-1}(q)=(p,\mathbf{t})$.
\begin{description}
	\item[Step 1] If there exist no horizontals at height $1$ before the first closure, then we must have decompostition
	$$q=UUp_1DDp_2,$$ where the second $D$ is the first closure. Now for the subpath $p_1$, we screen from left to right, label collectively the hills, horizontals at height $0$, and valleys at height $-1$, as $\mathrm{phv}_1,\ldots,\mathrm{phv}_{k-1}$, and set $t_k=0$. For the extreme case when $p_1$ is empty or it has none of the steps mentioned above, we simply take $k=1$ and $\mathbf{t}=(0)$. Now for each $1\le j\le k-1$, we assign a value to $t_j$ and transform $\mathrm{phv}_j$ according to its type.
\begin{itemize}
\item If $\mathrm{phv}_j=UD$, we leave it unchanged and set $t_j=0$.
\item If $\mathrm{phv}_j=H$, we replace $H$ with $UD$ and set $t_j=1$.
\item If $\mathrm{phv}_j=DU$, we replace $DU$ with $UD$ and set $t_j=2$.
\end{itemize}
Denote the modified path as $\hat{p_1}$ and put $p=\hat{p_1}UDp_2$.
	\item[Step 2] Otherwise, locate the first horizontal at height $1$ (denoted as $H$), and the first closure (denoted as $D$), which must appear to the right of $H$, and decompose as
	$$q=Up_1Hq_1Dp_2,$$ If $p_1=\emptyset$, we screen $q_1$ from left to right, label collectively the hills and the horizontals at height $0$ as $\mathrm{ph}_1,\ldots,\mathrm{ph}_{k-1}$, and set $t_k=1$. For each $1\le j\le k-1$, we assign a value to $t_j$ and transform $\mathrm{ph}_j$ according to its type.
	\begin{itemize}
    \item If $\mathrm{ph}_j=UD$, we leave it unchanged and set $t_j=0$.
    \item If $\mathrm{ph}_j=H$, we replace $H$ with $UD$ and set $t_j=1$.
    \end{itemize}
    Denote the modified path as $\hat{q_1}$ and put $p=\hat{q_1}UDp_2$.
	\item[Step 3] Otherwise, $p_1$ is a non-empty little Schr\"oder path, so we can assume $p_1=Uq_2D$. We put $$\hat{q}=q_1DUq_2UDp_2.$$ Now label the hills, the horizontals at height $0$, and the valleys at height $-1$ in $q_1DUq_2$ as $\mathrm{phv}_1,\ldots,\mathrm{phv}_{k-1}$, and set $t_k=1$. For each $1\le j\le k-1$, we assign a value to $t_j$ and transform $\mathrm{phv}_j$ according to its type.
\begin{itemize}
\item If $\mathrm{phv}_j=UD$, we leave it unchanged and set $t_j=0$.
\item If $\mathrm{phv}_j=H$, we replace $H$ with $UD$ and set $t_j=1$.
\item If $\mathrm{phv}_j=DU$, we replace $DU$ with $UD$ and set $t_j=2$.
\end{itemize}
Denote the modified path as $p$.
\end{description}

In all cases, we see $p\in\s_{n-1,k}$ and $\mathbf{t}=(t_1,\ldots,t_k)\in\T_k$, hence $\psi^{-1}$ is well-defined. It is routine to check that $\psi$ and $\psi^{-1}$ are indeed inverse to each other. Two examples of applying $\psi^{-1}$ are given in Figures \ref{inv-psi1} and \ref{inv-psi2}, where those steps in $q$ that contribute to the total number $k=6$ are labeled. In addition, the first closure is marked by $\searrow$, while the first horizontal of height $1$ before the first closure is marked by $\rightarrow$.

The proof of \eqref{litsch-A} is analogous, we begin by constructing a map
$$\Psi:\bigcup_{j=k-1,j\neq k}^{n-1}\s_{n-1,j}\times\T_{j-k}\rightarrow \s_{n,k}, \text{ where }\T_{-1}=\{\emptyset\}.$$
For each pair $(p,\mathbf{t})\in\bigcup_{j=k-1,j\neq k}^{n-1}\s_{n-1,j}\times\T_{j-k}$, we construct a path $q\in\s_{n,k}$ according to the following two cases. Then we set $\Psi(p,\mathbf{t})=q$.
\begin{description}
\item[Case 1] If $j=k-1$, then we put $q=UDp$.
\item[Case 2] Otherwise $j> k$. We decompose $p$ as $p=p_1UDp_2$, where $UD$ is the $k$-th hill of $p$ counting {\bf from right to left}. Therefore $p_1$ has $j-k$ hills. We see that $\psi(p_1,\mathbf{t})$ is a little Schr\"oder path without hills. Now we put $$q=\psi(p_1,\mathbf{t})UDp_2.$$
\end{description}
In both cases, we see $q\in\s_{n,k}$ so $\Psi$ is well-defined. Moreover, one observes that $q$ is derived from Case 1 if and only if it begins with $UD$, making it clear how we should define the inverse of $\Psi$. Consequently we see that $\Psi$ is bijective. The proof is now completed.
\end{proof}

\begin{figure}[htb]
\setlength {\unitlength} {5.8mm}
\begin{picture} (30,8)
\put(1,4){\line(1,1){2}}
\put(3,6){\line(1,0){2}}
\put(5,6){\line(1,1){1}}
\put(6,7){\line(1,1){1}}
\put(7,8){\line(1,-1){2}}
\put(9,6){\line(1,1){1}}
\put(10,7){\line(1,-1){2}}
\put(12,5){\line(1,1){1}}
\put(13,6){\line(1,-1){1}}
\put(14,5){\line(1,1){1}}
\put(15,6){\line(1,0){2}}
\put(17,6){\line(1,-1){2}}
\put(19,4){\line(1,1){2}}
\put(21,6){\line(1,-1){1}}
\put(22,5){\line(1,0){2}}
\put(24,5){\line(1,1){1}}
\put(25,6){\line(1,-1){2}}
\put(0,4){$q$:}
\put(3.8,6.2){$H$}
\put(9,6.4){$U$}
\put(10.6,6.4){$D$}
\put(11.4,5.7){$D$}
\put(12.3,5.7){$U$}
\put(13.4,5.7){$D$}
\put(14.3,5.7){$U$}
\put(15.8,6.2){$H$}
\put(18.3,4.5){$\searrow$}

\put(1,0){\line(1,1){1}}
\put(2,1){\line(1,-1){1}}
\put(3,0){\line(1,1){2}}
\put(5,2){\line(1,-1){2}}
\put(7,0){\line(1,1){1}}
\put(8,1){\line(1,-1){1}}
\put(9,0){\line(1,1){1}}
\put(10,1){\line(1,-1){1}}
\put(11,0){\line(1,1){1}}
\put(12,1){\line(1,-1){1}}
\put(13,0){\line(1,1){1}}
\put(14,1){\line(1,-1){1}}
\put(15,0){\line(1,1){1}}
\put(16,1){\line(1,-1){1}}
\put(17,0){\line(1,1){2}}
\put(19,2){\line(1,-1){1}}
\put(20,1){\line(1,0){2}}
\put(22,1){\line(1,1){1}}
\put(23,2){\line(1,-1){2}}
\put(0,0){$p$:}
\end{picture}
\caption{$\psi^{-1}(q)=(p,\mathbf{t})$ with $\mathbf{t}=(1,0,2,2,1,0)$.
\label{inv-psi1}}
\end{figure}
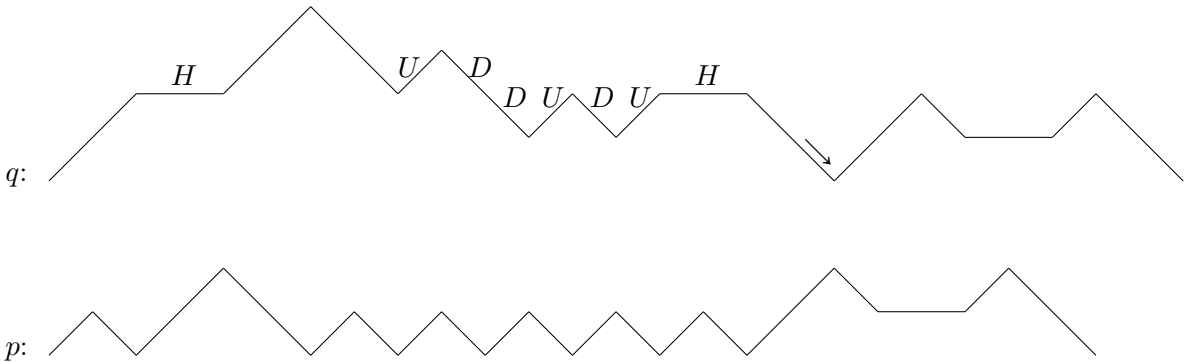


\begin{figure}
\setlength {\unitlength} {5.8mm}
\begin{picture} (30,8)
\put(1,4){\line(1,1){3}}
\put(4,7){\line(1,-1){1}}
\put(5,6){\line(1,1){2}}
\put(7,8){\line(1,-1){3}}
\put(10,5){\line(1,0){2}}
\put(12,5){\line(1,1){1}}
\put(13,6){\line(1,-1){1}}
\put(14,5){\line(1,0){2}}
\put(16,5){\line(1,1){1}}
\put(17,6){\line(1,0){2}}
\put(19,6){\line(1,-1){1}}
\put(20,5){\line(1,1){1}}
\put(21,6){\line(1,-1){2}}
\put(23,4){\line(1,1){2}}
\put(25,6){\line(1,-1){2}}
\put(0,4){$q$:}
\put(22.3,4.5){$\searrow$}
\put(10.8,5.1){$\rightarrow$}
\put(3.1,6.5){$U$}
\put(4.6,6.5){$D$}
\put(12.1,5.5){$U$}
\put(13.6,5.5){$D$}
\put(14.8,5.1){$H$}
\put(20.1,5.5){$U$}
\put(21.6,5.5){$D$}
\put(2.1,5.5){$U$}
\put(9.6,5.5){$D$}

\put(1,0){\line(1,1){1}}
\put(2,1){\line(1,-1){1}}
\put(3,0){\line(1,1){1}}
\put(4,1){\line(1,-1){1}}
\put(5,0){\line(1,1){1}}
\put(6,1){\line(1,0){2}}
\put(8,1){\line(1,-1){1}}
\put(9,0){\line(1,1){1}}
\put(10,1){\line(1,-1){1}}
\put(11,0){\line(1,1){1}}
\put(12,1){\line(1,-1){1}}
\put(13,0){\line(1,1){1}}
\put(14,1){\line(1,-1){1}}
\put(15,0){\line(1,1){2}}
\put(17,2){\line(1,-1){2}}
\put(19,0){\line(1,1){1}}
\put(20,1){\line(1,-1){1}}
\put(21,0){\line(1,1){2}}
\put(23,2){\line(1,-1){2}}
\put(0,0){$p$:}
\end{picture}
\caption{$\psi^{-1}(q)=(p,\mathbf{t})$ with $\mathbf{t}=(0,1,0,2,0,1)$.
\label{inv-psi2}}
\end{figure}





\section{Separable permutations}\label{sec:comb pf sep}

Other than the Schr\"oder paths, there are quite a few combinatorial structures enumerated by the large Schr\"oder numbers (see for example \cite[Exercise 6.39]{StaEC2}). One of them is the set of \emph{separable permutations} (see \cite{SS,Wes,Kit}). A permutation is called \emph{separable}, if it does not contain a subsequence of four elements with the same pairwise comparisons as $2413$ or $3142$. We denote the set of all separable permutations of $[n]:=\{1,2,\ldots,n\}$ by $\mathfrak{S}_{n}(2413,3142)$, where $\mathfrak{S}_n$ is the $n$-th symmetric group. Now we define $\iar$ on the entire symmetric group $\mathfrak{S}_n$. It should be noted that in the literature, $\iar$ has made its appearance as $\lir$ (leftmost increasing run), see \cite[pp. 5]{CK}.

\begin{Def}
For any permutation $\pi=\pi_1\pi_2\cdots\pi_n\in\mathfrak{S}_n$, we take $\iar(\pi)$ to be the greatest integer $i, 1\le i\le n$ such that $\pi_1<\cdots<\pi_{i}$. For each $i, 1\le i< n$, we call it a \emph{descent} of $\pi$, if and only if $\pi_i>\pi_{i+1}$. Then alternatively, $\iar(\pi)$ is the smallest descent of $\pi$. And $\iar(\pi)=n$ if and only if $\pi=12\cdots n$ has no descents at all.
\end{Def}

Now for $1\le k\le n$, let $$\PP_{n,k}:=\{\pi\in\mathfrak{S}_n(2413,3142):\iar(\pi)=k\},$$ whose cardinality we denote as $p(n,k)$. Then we have the following theorem.
\begin{theorem}\label{Sep-iar}
We have $p(1,1)=1$, and
\begin{align}
p(n,1) &=\sum_{j=1}^{n-1}2^{j-1}p(n-1,j), \; \text{ for }n\ge 2,\label{sep-Z}\\
p(n,k) &=p(n-1,k-1)+\sum_{j=k}^{n-1}2^{j-k}p(n-1,j),\; \text{ for }2\le k \le n.\label{sep-A}
\end{align}
Consequently, for all $1\le k\le n$,
\begin{align}
r(n-1,k-1)=p(n,k).\label{p=r}
\end{align}
\end{theorem}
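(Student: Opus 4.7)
My plan is to prove the recurrences \eqref{sep-Z} and \eqref{sep-A} for $p(n,k)$ combinatorially, and then deduce \eqref{p=r} at the end. Indeed, under the substitution $(n,k)\mapsto(n+1,k+1)$, these two recurrences become exactly \eqref{sch-Z} and \eqref{sch-A}; since the initial values also match ($p(1,1)=1=r(0,0)$), strong induction on $n$ immediately yields $p(n,k)=r(n-1,k-1)$.

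To establish the recurrences for $p(n,k)$, I would work with the di-sk tree representation of separable permutations from \cite{FLZ}: each $\pi\in\mathfrak{S}_n(2413,3142)$ corresponds to a rooted plane binary tree with $n$ leaves whose internal nodes carry labels in $\{\oplus,\ominus\}$ subject to a canonicity condition, so that the root label dictates whether $\pi=\sigma\oplus\tau$ or $\pi=\sigma\ominus\tau$. The statistic $\iar$ is governed by
\[
\iar(\sigma\oplus\tau)=\begin{cases}\iar(\sigma),&\iar(\sigma)<|\sigma|,\\ |\sigma|+\iar(\tau),&\iar(\sigma)=|\sigma|,\end{cases}\qquad \iar(\sigma\ominus\tau)=\min\bigl(\iar(\sigma),|\sigma|\bigr),
\]
from which $\iar(\pi)$ can be read off by walking down the leftmost root-to-leaf path and stopping at the first ``descent-producing'' internal node.

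With this characterization, I would produce a bijection
\[
\bigcup_{j\ge k-1}\PP_{n-1,j}\times\B_{j-k}\;\longrightarrow\;\PP_{n,k}\qquad(\B_{-1}:=\{\emptyset\}),
\]
in the spirit of the maps $\Phi$ and $\phi$ of Section~\ref{sec:comb pf}. The $j=k-1$ term accounts for $p(n-1,k-1)$: prepend a new minimum element to $\pi'\in\PP_{n-1,k-1}$, extending its initial ascending run by exactly one. The $j\ge k$ terms arise from shortening the leftmost ascending path of $\pi'\in\PP_{n-1,j}$ down to length $k$ and re-attaching the peeled-off leaves further to the right, with $j-k$ independent $\oplus/\ominus$ choices at the newly created internal nodes being recorded by a binary string in $\B_{j-k}$, which produces the factor $2^{j-k}$. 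Recurrence \eqref{sep-Z} is the special case $k=1$.

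The main obstacle is to design the tree-surgeries so that they preserve the canonicity of di-sk trees and so that each target permutation is attained exactly once; a cleaner alternative, which would simultaneously resolve Problem~\ref{find stat}(ii), would be to construct a direct $\iar$-to-hill bijection from $\PP_{n,k}$ to $\R_{n-1,k-1}$ by unrolling the $\oplus/\ominus$ decomposition of $\pi$ into a Schr\"oder path, with the hills at the outer level of the path encoding the initial $\oplus$-only portion of the tree.
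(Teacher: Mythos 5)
Your overall plan coincides with the paper's: pass from separable permutations to their decomposition (di-sk) trees, identify $\PP_{n,k}$ with the trees whose label sequence begins with $k-1$ copies of $\oplus$ followed by a $\ominus$, realize the $p(n-1,k-1)$ term by a one-node extension, account for the remaining terms by a bijection from $\bigcup_{j\ge k}\PP_{n-1,j}\times\B_{j-k}$ in which a binary string supplies the factor $2^{j-k}$, and finally deduce \eqref{p=r} by matching recurrences and initial values (noting $p(n-1,0)=0$ so that \eqref{sep-Z} is the $k=1$ case). Your recursive formulas for $\iar(\sigma\oplus\tau)$ and $\iar(\sigma\ominus\tau)$ are correct and would serve as a substitute for the paper's route through Theorem~\ref{FLZ-bij} and Corollary~\ref{dt=sp/iar}, which instead records descents directly as $\ominus$-labels in in-order.

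There is, however, a genuine gap: the bijection itself is never constructed. You describe it only as ``shortening the leftmost ascending path and re-attaching the peeled-off leaves further to the right, with $j-k$ independent $\oplus/\ominus$ choices at the newly created internal nodes,'' and then you explicitly flag as ``the main obstacle'' the task of making these surgeries preserve the right chain condition and hit each target exactly once. That obstacle is precisely the content of the proof. In the paper this is the map $\rho$, whose construction requires three nontrivial steps: a new $\ominus$-node is inserted at in-order position $k$ (as a left child of $T(k)$ or a right child of $T(k-1)$, depending on the local shape); a correction is applied when the result accidentally lands in the exceptional set $\DT_{n,k}^*$ reserved for the $j=k-1$ term; and the $\ominus$-nodes at positions $k+1,\ldots,j$ are cut out together with their subtrees and re-attached in a left-edge chain below the new node, where the number of leading zeros of $b$ determines the cut and the remaining bits determine whether the chain-flipping involution $\tau$ is applied to each piece --- it is this involution, not a free choice of label at a fresh node, that produces the factor $2^{j-k}$ while keeping every piece a legal di-sk tree. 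None of these design decisions is forced by your outline, and without them injectivity and surjectivity cannot be checked. As written, the proposal is a correct strategy with the decisive construction missing.
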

\begin{remark}
It is unclear to us, how the initial ascents $\pi_1<\pi_2<\cdots<\pi_{\iar(\pi)}$ can decompose the permutation $\pi$ as the hills does to a Schr\"oder path. Therefore it seems difficult, if not impossible, to prove Theorem~\ref{Sep-iar} algebraically using Riorday array.
\end{remark}
Equation \eqref{p=r} immediately follows from the same recurrences \eqref{sch-Z}--\eqref{sch-A}, and \eqref{sep-Z}--\eqref{sep-A}, as well as the fact that $r(0,0)=p(1,1)=1$. Moreover, it justifies using $\iar$ as a valid statistic to answer Problem~\ref{find stat} (i). All it remains is to show \eqref{sep-Z} and \eqref{sep-A}. To this end, we need to utilize certain kind of labeled and rooted binary trees, which was formally defined in \cite{FLZ}. And implicitly, it had previously appeared in Shapiro and Stephens' work \cite{SS}.
\begin{Def}{\cite[Def.~2.2]{FLZ}}
A rooted binary tree is called \emph{di-sk tree} if its nodes are labeled either with $\oplus$ or $\ominus$, such that no node has the same label as its right child (this is called the \emph{right chain condition}). We use the \emph{in-order} (traversal) to compare nodes on di-sk trees: starting with the root node, we recursively traverse the left subtree to parent then to the right subtree if any. The set of all di-sk trees with $n-1$ nodes is denoted as $\mathfrak{DT}_{n}$. The $i$-th node of a di-sk tree $T$ is denoted as $T(i)$.
\end{Def}
\begin{remark}
For each di-sk tree $T\in\mathfrak{DT}_n$, it uniquely decides a \emph{label sequence} $(L_1,L_2,\ldots,L_{n-1})$, where $L_i$ is the label of the $i$-th node (by in-order) of $T$. But conversely this is not true. See Fig.~\ref{label seq} for all four di-sk trees in $\mathfrak{DT}_{4}$ that share the same label sequence. Note that the last tree is not a di-sk tree since it does not satisfy the right chain condition.
\end{remark}

\begin{figure}
\begin{tikzpicture}[scale=0.4]
\draw[-] (1,1) to (3,3);
\draw[-] (4,4) to (6,6);

\draw[-] (10,3) to (12,1);
\draw[-] (10,4) to (12,6);

\draw[-] (16,1) to (18,3);
\draw[-] (19,3) to (21,1);

\draw[-] (25,6) to (27,4);
\draw[-] (28,3) to (30,1);

\draw[-] (34,6) to (36,4);
\draw[-] (36,3) to (34,1);
\node at (.5,.5) {$\ominus$};
\node at (3.5,3.5) {$\oplus$};
\node at (6.5,6.5) {$\ominus$};

\node at (9.5,3.5) {$\ominus$};
\node at (12.5,.5) {$\oplus$};
\node at (12.5,6.5) {$\ominus$};

\node at (15.5,.5) {$\ominus$};
\node at (18.5,3.5) {$\oplus$};
\node at (21.5,.5) {$\ominus$};

\node at (24.5,6.5) {$\ominus$};
\node at (27.5,3.5) {$\oplus$};
\node at (30.5,0.5) {$\ominus$};

\node at (33.5,6.5) {$\ominus$};
\node at (33.5,.5) {$\oplus$};
\node at (36.5,3.5) {$\ominus$};
\end{tikzpicture}
\caption{Five trees with label sequence $(\ominus,\oplus,\ominus)$, only the first four being di-sk trees.}\label{label seq}
\end{figure}
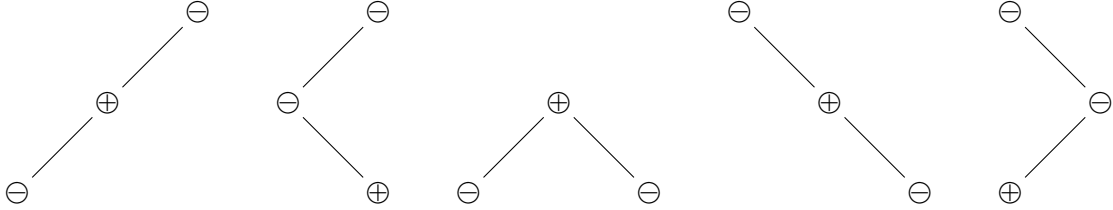

A di-sk tree is called \emph{right-branching}, if its root does not have left child. For example, only the fourth tree in Fig~\ref{label seq} is right-branching. We need the following theorem, its corollary, and three operations.

\begin{theorem}[Theorem 2.3 in \cite{FLZ}]\label{FLZ-bij}
There exists a bijection $\eta: \mathfrak{S}_n(2413,3142)\rightarrow \mathfrak{DT}_n$ such that
\begin{align*}
i\in\DES(\pi)\; &\Leftrightarrow \; \text{the i-th node (by in-order) of $\eta(\pi)$ is labeled $\ominus$},
\end{align*}
where $\DES(\pi)$ is the set of all descents of $\pi$.
\end{theorem}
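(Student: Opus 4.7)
The plan is to proceed by induction on $n$, exploiting the fundamental recursive structure of separable permutations. First I would prove the decomposition lemma: every separable permutation $\pi\in\mathfrak{S}_n(2413,3142)$ with $n\geq 2$ admits a decomposition of exactly one of the two forms $\pi=\sigma\oplus\tau$ (direct sum) or $\pi=\sigma\ominus\tau$ (skew sum), where $\sigma,\tau$ are themselves separable. Mutual exclusivity follows from comparing which positions must hold the smallest (resp. largest) values; exhaustiveness --- that a separable permutation of length $\geq 2$ must be sum- or skew-decomposable --- is the standard structural characterization of separability and can be established by showing that any indecomposable block of length $\geq 4$ forces a $2413$ or $3142$ pattern. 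To eliminate associativity ambiguity such as $\sigma\oplus(\sigma'\oplus\tau')=(\sigma\oplus\sigma')\oplus\tau'$, I would impose the \emph{canonical} convention that in $\pi=\sigma\oplus\tau$ the right summand $\tau$ is sum-indecomposable, and symmetrically for $\ominus$.

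Having secured the canonical decomposition, I would define $\eta$ recursively: the unique permutation of length $1$ is sent to the empty tree in $\mathfrak{DT}_1$; if $\pi=\sigma\oplus\tau$ is canonical, then $\eta(\pi)$ is the rooted binary tree with root labeled $\oplus$, left subtree $\eta(\sigma)$, and right subtree $\eta(\tau)$, and symmetrically with label $\ominus$ in the skew case. A node count gives $|\eta(\pi)|=(|\sigma|-1)+1+(|\tau|-1)=n-1$, so the output lies in the correct size class. The right chain condition is then automatic: if $\eta(\pi)$ has root $\oplus$ and non-empty right subtree $\eta(\tau)$, canonicity forces $\tau$ to be sum-indecomposable yet separable of length $\geq 2$, hence skew-decomposable, so the root of $\eta(\tau)$ is $\ominus$; induction propagates the condition throughout. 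The inverse $\eta^{-1}$ is built in parallel: reading the root label decides between $\oplus$- and $\ominus$-composition, and recursion on the subtrees is applied; the right chain condition guarantees that the resulting decomposition is automatically canonical, so $\eta$ and $\eta^{-1}$ are mutual inverses.

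For the descent property, I would induct on $n$ by tracking how in-order positions in $\eta(\pi)$ correspond to permutation positions in $\pi$. In the case $\pi=\sigma\oplus\tau$ with $|\sigma|=k$, the in-order traversal of $\eta(\pi)$ lists $\eta(\sigma)$ first (nodes $1,\ldots,k-1$), then the root (node $k$, labeled $\oplus$), then $\eta(\tau)$ (nodes $k+1,\ldots,n-1$). For $i<k$ the descent status at position $i$ of $\pi$ equals that at position $i$ of $\sigma$, matching the $i$-th node of $\eta(\sigma)$ by the induction hypothesis; at $i=k$ the inequality $\pi_k\leq k<k+1\leq\pi_{k+1}$ gives an ascent, matching the $\oplus$ label at the root; for $i>k$, descents shift by $k$ and align with the induction hypothesis applied to $\tau$. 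The $\ominus$ case is symmetric, with the junction inequality reversed. The principal obstacle I expect is the foundational decomposition lemma, specifically the exhaustiveness part --- showing that an arbitrary separable permutation of length $\geq 2$ is either sum- or skew-decomposable. Once this structural fact is in hand, the remaining steps (canonicity, right chain condition, descent bookkeeping) are routine inductive verifications.
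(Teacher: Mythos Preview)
Your proposal is correct and follows the standard route to this result. Note, however, that the present paper does \emph{not} prove Theorem~\ref{FLZ-bij}: it is quoted verbatim from \cite{FLZ} and used as a black box, so there is no ``paper's own proof'' to compare against here. What you have sketched is essentially the construction one finds in the literature (and presumably in \cite{FLZ}): exploit the sum/skew-sum recursive structure of separable permutations, normalize the decomposition so that the right factor is indecomposable of the same type, and observe that this normalization is exactly what forces the right chain condition on the resulting labeled binary tree. Your in-order bookkeeping for the descent correspondence is accurate.

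The only point worth flagging is the one you already identified: the exhaustiveness half of the decomposition lemma (every $(2413,3142)$-avoider of length $\ge 2$ is sum- or skew-decomposable) is the substantive step. Your phrase ``any indecomposable block of length $\geq 4$ forces a $2413$ or $3142$ pattern'' slightly understates what is needed --- one must rule out \emph{simultaneous} sum- and skew-indecomposability for all lengths $\ge 2$, not just $\ge 4$ (lengths $2$ and $3$ are trivially handled, but should be mentioned). The cleanest argument runs through simple permutations: a permutation that is both sum- and skew-indecomposable of length $\ge 4$ contains a simple permutation of length $\ge 4$, and every such simple permutation contains $2413$ or $3142$. Once that lemma is secured, everything else in your outline is routine.
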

This map $\eta$ bijectively establishes the following corollary.
\begin{corollary}\label{dt=sp/iar}
For any $1\le k\le n$, let $\DT_{n,k}$ denote the set of all di-sk trees with $n-1$ nodes whose label sequence begins as $(\underbrace{\oplus,\ldots,\oplus}_{k-1},\ominus,\ldots)$. Then we have $$|\DT_{n,k}|=p(n,k).$$
\end{corollary}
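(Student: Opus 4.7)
The plan is to deduce this corollary as a direct consequence of Theorem~\ref{FLZ-bij}, by verifying that $\eta$ carries $\PP_{n,k}$ bijectively onto $\DT_{n,k}$.

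First I would unpack the definition of $\iar$ in terms of $\DES$. By construction, $\iar(\pi)$ is the smallest descent of $\pi$, with the convention that $\iar(\pi)=n$ exactly when $\pi=12\cdots n$ has $\DES(\pi)=\emptyset$. Hence the condition $\iar(\pi)=k$ is equivalent to the two-part statement: $i\notin\DES(\pi)$ for all $1\le i\le k-1$, and moreover $k\in\DES(\pi)$ whenever $k<n$ (the case $k=n$ being the no-descent permutation).

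Next I would feed this characterization through Theorem~\ref{FLZ-bij}. That theorem asserts that $i\in\DES(\pi)$ if and only if the $i$-th node (by in-order) of $\eta(\pi)$ carries the label $\ominus$; equivalently, $i\notin\DES(\pi)$ if and only if that node is $\oplus$. Applying this translation, $\iar(\pi)=k$ becomes exactly the condition that the label sequence $(L_1,\ldots,L_{n-1})$ of $\eta(\pi)$ satisfies $L_1=\cdots=L_{k-1}=\oplus$ and, when $k<n$, $L_k=\ominus$. This is precisely the defining property of $\DT_{n,k}$, with the natural reading of the boundary case $k=n$ (the entire sequence being $\oplus$'s, with no further $\ominus$ required).

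Therefore $\eta$ restricts to a bijection $\PP_{n,k}\to\DT_{n,k}$, and we conclude $|\DT_{n,k}|=p(n,k)$. I do not anticipate any real obstacle: the only subtlety is the boundary case $k=n$, where one must interpret the template $(\underbrace{\oplus,\ldots,\oplus}_{k-1},\ominus,\ldots)$ as allowing the trailing $\ominus$ to lie beyond the length of the sequence. All remaining content is already packaged inside Theorem~\ref{FLZ-bij} and the definition of $\iar$.
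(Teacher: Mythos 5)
Your proposal is correct and follows exactly the paper's route: the paper derives the corollary directly from Theorem~\ref{FLZ-bij} by observing that $\eta$ translates the condition $\iar(\pi)=k$ (no descents before position $k$, a descent at $k$ unless $k=n$) into the label-sequence condition defining $\DT_{n,k}$. Your treatment of the boundary case $k=n$ is the right reading and adds nothing beyond what the paper leaves implicit.
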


\begin{Def}
Given two di-sk trees $S\in\mathfrak{DT}_m$ and $T\in\mathfrak{DT}_n$, we define three operations, each of which combines $S$ and $T$ to give us a di-sk tree in $\mathfrak{DT}_{m+n-1}$.
\begin{itemize}
	\item For any node of $T$ that has no left child, say $T(i)$, let $$S\slash T[i]$$ be the di-sk tree derived by attaching $S$ to $T$ from left, such that the root of $S$ becomes the {\bf left} child of $T(i)$.
	\item If the root of $S$ is labeled $\oplus$ (resp. $\ominus$), then for any $\ominus$-node (resp. $\oplus$-node) of $T$ that has no right child, say $T(i)$, we let $$T[i]\backslash S $$ be the di-sk tree derived by attaching $S$ to $T$ from right, such that the root of $S$ becomes the {\bf right} child of $T(i)$.
	\item Assume both $S$ and $T$ are right-branching. If $S$ is a single node, then let $$S\#T:=S\slash T[1],$$ otherwise we set $$S\#T:=T\slash S[2].$$ In both cases, we see that $S(1)$ and $T(1)$ are the first and second node of $S\#T$.
\end{itemize}
\end{Def}
The notion of \emph{right chain} is of great importance due to the right chain condition for all di-sk trees. We introduce it now, together with an involution that acts on each di-sk tree.
\begin{Def}
Given a di-sk tree, its right chain (or simply chain) is any maximal chain composed of only right edges. We say two nodes are at \emph{the same level} if they are connected by a sequence of left edges. Then, whether two chains are at the same level or not is according to their heads.
\end{Def}
\begin{Def}
For each di-sk tree $T$, the map $\tau$ switches the label of the root of $T$ (i.e., $\ominus$ becomes $\oplus$ and $\oplus$ becomes $\ominus$), and thus switches the labels of all the nodes in the same chain as the root, to guarantee that $\tau(T)$ is still a di-sk tree. Moreover, it keeps the labels of all the remaining nodes in $T$ and the tree structure of $T$.
\end{Def}

We take $\tau^0(T)=T$ and $\tau$ is clearly an involution on $\DT_n$. Let $\I_0:=\{0\}$ and for $k\ge 1$,
$$\I_k:=\{b=b_1\cdots b_k: \text{$b_i=0$ or $1$, for $1\le i\le k$}\}$$
be the set of binary numbers with exactly $k$ digits, whose cardinality is $2^k$. We are now ready to prove the main result of this section.



\begin{proof}[Proof of Theorem \ref{Sep-iar}]First note that $\mathfrak{DT}_{n-1,k-1}$ is in bijection with the following subset of $\mathfrak{DT}_{n,k}$.
$$\mathfrak{DT}_{n,k}^*:=\{T\in\DT_{n,k}: \text{there exists $S\in\DT_{n-1,k-1}$ such that $T=\oplus\slash S[1]$}\},$$
which is the empty set if $k=1$. In view of Corollary~\ref{dt=sp/iar}, it then suffices to construct a bijection $$\rho:\bigcup_{j=k}^{n-1}\DT_{n-1,j}\times\I_{j-k}\rightarrow \DT_{n,k}\setminus \DT_{n,k}^*, \text{ for $1\le k\le n-1$}.$$


For each pair $(T,b)\in\DT_{n-1,j}\times\I_{j-k}$, we construct a di-sk tree $S\in\mathfrak{DT}_{n,k}\setminus\DT_{n,k}^*$ by three steps described below. Then we set $\rho(T,b)=S$. Note that since $j\ge k$, the label sequence of $T$ must begins as $(\underbrace{\oplus,\ldots,\oplus}_{k-1},L_k,\ldots,L_{n-2})$.

\begin{description}
\item[Step 1] If $T(k)$ exists and has no left child, then let $$\hat{T}:=\ominus\slash T[k].$$ Otherwise $T(k)$ does not exist (this happens only when $k=n-1$), or the left child of $T(k)$ is $T(k-1)$, which is labeled $\oplus$. We let $$\hat{T}:=T[k-1]\backslash\ominus.$$ In either case, we see $\hat{T}\in\DT_{n,k}$ and the newly added $\ominus$-node is $\hat{T}(k)$. Now if $\hat{T}\in\DT_{n,k}\setminus \DT_{n,k}^*$, let $\hat{S}:=\hat{T}$ and go to Step 3.

\item[Step 2] Otherwise we have $\hat{T}\in\DT_{n,k}^*$, so $\hat{T}(1)$ is labeled $\oplus$ and does not have right child. Suppose $\hat{T}(m)$ is the smallest indexed node at the same level as $\hat{T}(1)$ that has right child. Clearly $2\le m \le k-1$. We cut the edge between $\hat{T}(m-1)$ and $\hat{T}(m)$ to get two subtrees. The one rooted at $\hat{T}(m-1)$ is denote by $R$, which is a path from $\hat{T}(1)$ to $\hat{T}(m-1)$ composed of left edges only. The other subtree is denoted as $\tilde{T}$. Now we put $$\hat{S}:=R\slash\tilde{T}[k-m+1].$$ One checks that $\hat{S}\in\DT_{n,k}\setminus \DT_{n,k}^*$. More precisely, $\hat{S}(k)$ and $\hat{S}(j+1)$ are labeled $\ominus$, and $\hat{S}(1),\ldots,\hat{S}(k-1),\hat{S}(k+1),\ldots,\hat{S}(j)$ are all labeled $\oplus$.

\item[Step 3] If $b$ contains no $1$, then take $S:=\hat{S}$ and we are done. Otherwise suppose $$b=0\cdots 01\hat{b},$$ where $\hat{b}=\hat{b}_1\cdots\hat{b}_{l}$, for some $0\le l\le j-k-1$. To get $S$, we ``cut and paste'' as follows.
\begin{description}
\item[Step 3-1] We delete the southwest and northeast edges $$
    \begin{smallmatrix}
    & & \diagup \\
	& \oplus & \\
	\diagup & & \\
	\end{smallmatrix},$$ if any, of all nodes $\hat{S}(k+1),\ldots,\hat{S}(k+l+1)$, and remove them as well as their descendants from $\hat{S}$. If doing so creates a pair of disconnected nodes that were at the same level in $\hat{S}$, then connect them. This modified tree still contains $\hat{S}(k)$ and is denoted as $\tilde{S}$. All the $l+1$ ``peeled off'' trees are right-branching, and we denote the one rooted at $\hat{S}(k+i)$ as $\hat{S}_{i}$, for $1\le i\le l+1$. 
\item[Step 3-2] We take
$$S:=\tilde{S}[k]\backslash P,$$ where $$P:=\tau^{\hat{b}_1}(\hat{S}_{1})[1]\slash\tau^{\hat{b}_2}(\hat{S}_2)[1]\slash\cdots\slash\tau^{\hat{b}_l}(\hat{S}_l)[1]\slash\hat{S}_{l+1}[1].$$ Intuitively, $P$ is formed by connecting the roots of $\tau^{\hat{b}_1}(\hat{S}_1),\ldots,\tau^{\hat{b}_l}(\hat{S}_l),\hat{S}_{l+1}$ one-by-one using left edges.
\end{description}
\end{description}

In all cases, we see $S\in\mathfrak{DT}_{n,k}$ and $\rho$ is well-defined. See Figure \ref{di-sk--1} for an example of applying $\rho$ on pairs $(T,b)$, with a fixed $T\in\DT_{14,6}$ and all possible choices of $b\in\I_4$.

Next, we show that $\rho$ is bijective by constructing its inverse. For each di-sk tree $S\in\DT_{n,k}\setminus \DT_{n,k}^*$, we determine an integer $j, 1\le j\le n-1$, construct a tree $T\in\mathfrak{DT}_{n-1,j}$, and a binary number $b\in\I_{j-k}$, by following the three steps described below. Then we set $\rho^{-1}(S)=(T,b)$.

\begin{description}
	\item[Step 1] Basically, we want to undo the ``cut and paste'' to recover $\hat{S}$, from which we can recognize the integer $j$. 
    \begin{description}
    	\item[Step 1-1] If $S(k)$ has no right child, then we put $\hat{S}=S$, $b=\underbrace{0\cdots 0}_{j-k}$ ($j$ will be determined in Step 2) and go to Step 2. Otherwise suppose $S(i_1),\ldots,S(i_l),S(i_{l+1})$ are all the nodes at the same level as the right child of $S(k)$, with $S(i_{l+1})$ being the right child itself. Let $$\hat{b}=\hat{b}_1\cdots\hat{b}_l,$$ where $$\hat{b}_m=\begin{cases}0 & \text{if $S(i_{m})$ is labeled $\oplus$}\\ 1 & \text{if $S(i_{m})$ is labeled $\ominus$}\end{cases}, \text{ for $1\le m\le l$}.$$ Now we cut the edge between $S(k)$ and $S(i_{l+1})$, as well as the southwest and northeast edges $$
    \begin{smallmatrix}
    & & \diagup \\
	& \oplus & \\
	\diagup & & \\
	\end{smallmatrix}$$ of all nodes $S(i_1),\ldots,S(i_{l})$, denote the subtree rooted at $S(i_m)$ as $S_m$, for $1\le m\le l+1$. The remaining subtree contains $S(k)$ and is denoted as $\tilde{S}$. Note that $S(k)=\tilde{S}(k)$ and all $S_m, 1\le m\le l+1$ are right-branching (see the definition before Theorem~\ref{FLZ-bij}). 
    	\item[Step 1-2] We get $\hat{S}$ by inserting $P$ into $\tilde{S}$ according to the following two cases, where
        $$P:=\tau^{\hat{b}_1}(S_1)\#(\tau^{\hat{b}_2}(S_2)\#(\cdots(\tau^{\hat{b}_l}(S_l)\#S_{l+1}))).$$
        \begin{itemize}
        	\item If $\tilde{S}(k)$ has a left parent, say $\tilde{S}(k_1)$, then connect $\tilde{S}(k_1)$ with $P(1)$ using a left edge. Moreover if $\tilde{S}(k_1)$ has a right parent in $\tilde{S}$, say $\tilde{S}(k_2)$, then connect the root of $P$ with $\tilde{S}(k_2)$ using a left edge. Denote this new di-sk tree by $\hat{S}$.
        	\item Otherwise we connect $\tilde{S}(k)$ with $P(1)$ using a left edge. Moreover if $\tilde{S}(k)$ has a right parent in $\tilde{S}$, say $\tilde{S}(k_3)$, then connect the root of $P$ with $\tilde{S}(k_3)$ using a left edge. Denote this new di-sk tree by $\hat{S}$.
        \end{itemize}
    \end{description}
    \item[Step 2] Clearly $\hat{S}(k)$ is still the first $\ominus$-node in $\hat{S}$, now suppose the second $\ominus$-node in $\hat{S}$ is the $m$-th node, then take $j=m-1$. In the case that $\hat{S}$ has only one $\ominus$-node, we take $j=n-1$. If $b$ is undefined, we put $$b:=\underbrace{0\cdots 0}_{j-k-l-1}1\hat{b}.$$ Now if $\hat{S}(k)$ does not have left child, then let $\hat{T}=\hat{S}$ and go to Step 3. Otherwise, we cut the edge between $\hat{S}(k-1)$ and $\hat{S}(k)$ to have two subtrees. The one rooted at $\hat{S}(k-1)$ is denoted as $R$, the other one has $\hat{S}(k)$ and is denoted as $\tilde{T}$. Now we put $$\hat{T}:=R\slash\tilde{T}[1].$$
    \item[Step 3] It should be clear that after the above two steps, $\hat{T}(k)$ is a leaf node (i.e. has no children) in $\hat{T}$. We simply delete this node and its associated edge from $\hat{T}$, and denote the new di-sk tree as $T$.
\end{description}

One could check that $T\in\mathfrak{DT}_{n-1,j}$ and $b\in\I_{j-k}$, hence $\rho^{-1}$ is well-defined. It is routine to check step-by-step, that $\rho$ and $\rho^{-1}$ are indeed inverse to each other. An example of applying the inverse map $\rho^{-1}$ can be found in Figure \ref{di-sk--2}. The proof is now completed.
\end{proof}

\begin{figure}[p]
\begin{tikzpicture}[scale=0.27]
\draw[-] (2,49) to (3,50);
\draw[-] (4,51) to (5,52);
\draw[-] (6,53) to (7,54);
\draw[-] (6,56) to (7,55);
\draw[-] (6,57) to (7,58);
\draw[-] (6,60) to (7,59);
\draw[-] (6,61) to (7,62);
\draw[-] (8,54) to (9,53);
\draw[-] (8,62) to (9,61);
\draw[-] (10,49) to (11,50);
\draw[-] (10,52) to (11,51);
\draw[-] (12,50) to (13,49);

\node at (1.5,48.5) {$\oplus$};
\node at (3.5,50.5) {$\oplus$};
\node at (5.5,52.5) {$\oplus$};
\node at (5.5,56.5) {$\oplus$};
\node at (5.5,60.5) {$\oplus$};
\node at (7.5,54.5) {$\ominus$};
\node at (7.5,58.5) {$\ominus$};
\node at (7.5,62.5) {$\ominus$};
\node at (9.5,48.5) {$\oplus$};
\node at (9.5,52.5) {$\oplus$};
\node at (9.5,60.5) {$\oplus$};
\node at (11.5,50.5) {$\ominus$};
\node at (13.5,48.5) {$\oplus$};
\node at (1,49.6) {$3$};
\node at (3,51.6) {$4$};
\node at (5,53.6) {$5$};
\node at (5,57.6) {$2$};
\node at (5,61.6) {$1$};
\node at (8,55.6) {$6$};
\node at (8,59.6) {$11$};
\node at (7,63.6) {$12$};
\node at (9,49.6) {$8$};
\node at (10,53.6) {$7$};
\node at (10,61.6) {$13$};
\node at (12,51.6) {$9$};
\node at (14,49.6) {$10$};

\node at (2,57) {$T=$};
\node at (15,56.5) {$\rho \atop \longrightarrow$};

\draw[-] (18,49) to (19,50);
\draw[-] (20,51) to (21,52);
\draw[-] (20,55) to (21,56);
\draw[-] (22,53) to (23,54);
\draw[-] (22,56) to (23,55);
\draw[-] (22,57) to (23,58);
\draw[-] (22,60) to (23,59);
\draw[-] (22,61) to (23,62);
\draw[-] (24,54) to (25,53);
\draw[-] (24,62) to (25,61);
\draw[-] (26,49) to (27,50);
\draw[-] (26,52) to (27,51);
\draw[-] (28,50) to (29,49);

\node at (17.5,48.5) {$\oplus$};
\node at (19.5,50.5) {$\oplus$};
\node at (19.5,54.5) {$\ominus$};
\node at (21.5,52.5) {$\oplus$};
\node at (21.5,56.5) {$\oplus$};
\node at (21.5,60.5) {$\oplus$};
\node at (23.5,54.5) {$\ominus$};
\node at (23.5,58.5) {$\ominus$};
\node at (23.5,62.5) {$\ominus$};
\node at (25.5,48.5) {$\oplus$};
\node at (25.5,52.5) {$\oplus$};
\node at (25.5,60.5) {$\oplus$};
\node at (27.5,50.5) {$\ominus$};
\node at (29.5,48.5) {$\oplus$};

\node at (23,47) {$b=0000$};

\draw[-] (36,49) to (37,50);
\draw[-] (38,51) to (39,52);
\draw[-] (38,58) to (39,57);
\draw[-] (38,59) to (39,60);
\draw[-] (38,62) to (39,61);
\draw[-] (38,63) to (39,64);
\draw[-] (40,53) to (41,54);
\draw[-] (40,56) to (41,55);
\draw[-] (40,64) to (41,63);
\draw[-] (42,54) to (43,53);
\draw[-] (44,49) to (45,50);
\draw[-] (44,52) to (45,51);
\draw[-] (46,50) to (47,49);

\node at (35.5,48.5) {$\oplus$};
\node at (37.5,50.5) {$\oplus$};
\node at (37.5,58.5) {$\ominus$};
\node at (37.5,62.5) {$\oplus$};
\node at (39.5,52.5) {$\oplus$};
\node at (39.5,56.5) {$\oplus$};
\node at (39.5,60.5) {$\ominus$};
\node at (39.5,64.5) {$\ominus$};
\node at (41.5,54.5) {$\ominus$};
\node at (41.5,62.5) {$\oplus$};
\node at (43.5,48.5) {$\oplus$};
\node at (43.5,52.5) {$\oplus$};
\node at (45.5,50.5) {$\ominus$};
\node at (47.5,48.5) {$\oplus$};

\node at (41,47) {$b=0001$};

\draw[-] (4,27) to (5,28);
\draw[-] (6,29) to (7,30);
\draw[-] (6,32) to (7,31);
\draw[-] (6,33) to (7,34);
\draw[-] (6,36) to (7,35);
\draw[-] (6,37) to (7,38);
\draw[-] (6,40) to (7,39);
\draw[-] (6,41) to (7,42);
\draw[-] (8,30) to (9,29);
\draw[-] (8,42) to (9,41);
\draw[-] (10,25) to (11,26);
\draw[-] (10,28) to (11,27);
\draw[-] (12,26) to (13,25);

\node at (3.5,26.5) {$\oplus$};
\node at (5.5,28.5) {$\oplus$};
\node at (5.5,32.5) {$\oplus$};
\node at (5.5,36.5) {$\ominus$};
\node at (5.5,40.5) {$\oplus$};
\node at (7.5,30.5) {$\ominus$};
\node at (7.5,34.5) {$\oplus$};
\node at (7.5,38.5) {$\ominus$};
\node at (7.5,42.5) {$\ominus$};
\node at (9.5,24.5) {$\oplus$};
\node at (9.5,28.5) {$\oplus$};
\node at (9.5,40.5) {$\oplus$};
\node at (11.5,26.5) {$\ominus$};
\node at (13.5,24.5) {$\oplus$};

\node at (7,23) {$b=0010$};

\draw[-] (20,27) to (21,28);
\draw[-] (22,29) to (23,30);
\draw[-] (22,32) to (23,31);
\draw[-] (22,33) to (23,34);
\draw[-] (22,36) to (23,35);
\draw[-] (22,37) to (23,38);
\draw[-] (22,40) to (23,39);
\draw[-] (22,41) to (23,42);
\draw[-] (24,30) to (25,29);
\draw[-] (24,42) to (25,41);
\draw[-] (26,25) to (27,26);
\draw[-] (26,28) to (27,27);
\draw[-] (28,26) to (29,25);

\node at (19.5,26.5) {$\oplus$};
\node at (21.5,28.5) {$\oplus$};
\node at (21.5,32.5) {$\ominus$};
\node at (21.5,36.5) {$\ominus$};
\node at (21.5,40.5) {$\oplus$};
\node at (23.5,30.5) {$\oplus$};
\node at (23.5,34.5) {$\oplus$};
\node at (23.5,38.5) {$\ominus$};
\node at (23.5,42.5) {$\ominus$};
\node at (25.5,24.5) {$\oplus$};
\node at (25.5,28.5) {$\ominus$};
\node at (25.5,40.5) {$\oplus$};
\node at (27.5,26.5) {$\oplus$};
\node at (29.5,24.5) {$\ominus$};

\node at (24,23) {$b=0011$};

\draw[-] (36,29) to (37,30);
\draw[-] (36,32) to (37,31);
\draw[-] (36,33) to (37,34);
\draw[-] (38,30) to (39,29);
\draw[-] (38,35) to (39,36);
\draw[-] (38,38) to (39,37);
\draw[-] (38,39) to (39,40);
\draw[-] (38,42) to (39,41);
\draw[-] (38,43) to (39,44);
\draw[-] (40,25) to (41,26);
\draw[-] (40,28) to (41,27);
\draw[-] (40,44) to (41,43);
\draw[-] (42,26) to (43,25);

\node at (35.5,28.5) {$\oplus$};
\node at (35.5,32.5) {$\oplus$};
\node at (37.5,30.5) {$\ominus$};
\node at (37.5,34.5) {$\oplus$};
\node at (37.5,38.5) {$\ominus$};
\node at (37.5,42.5) {$\oplus$};
\node at (39.5,24.5) {$\oplus$};
\node at (39.5,28.5) {$\oplus$};
\node at (39.5,36.5) {$\oplus$};
\node at (39.5,40.5) {$\ominus$};
\node at (39.5,44.5) {$\ominus$};
\node at (41.5,26.5) {$\ominus$};
\node at (41.5,42.5) {$\oplus$};
\node at (43.5,24.5) {$\oplus$};

\node at (40,23) {$b=0100$};

\draw[-] (4,5) to (5,6);
\draw[-] (4,8) to (5,7);
\draw[-] (4,9) to (5,10);
\draw[-] (6,6) to (7,5);
\draw[-] (6,11) to (7,12);
\draw[-] (6,14) to (7,13);
\draw[-] (6,15) to (7,16);
\draw[-] (6,18) to (7,17);
\draw[-] (6,19) to (7,20);
\draw[-] (8,1) to (9,2);
\draw[-] (8,4) to (9,3);
\draw[-] (8,20) to (9,19);
\draw[-] (10,2) to (11,1);

\node at (3.5,4.5) {$\oplus$};
\node at (3.5,8.5) {$\ominus$};
\node at (5.5,6.5) {$\oplus$};
\node at (5.5,10.5) {$\oplus$};
\node at (5.5,14.5) {$\ominus$};
\node at (5.5,18.5) {$\oplus$};
\node at (7.5,0.5) {$\oplus$};
\node at (7.5,4.5) {$\ominus$};
\node at (7.5,12.5) {$\oplus$};
\node at (7.5,16.5) {$\ominus$};
\node at (7.5,20.5) {$\ominus$};
\node at (9.5,2.5) {$\oplus$};
\node at (9.5,18.5) {$\oplus$};
\node at (11.5,0.5) {$\ominus$};

\node at (7,-1) {$b=0110$};

\draw[-] (20,5) to (21,6);
\draw[-] (20,8) to (21,7);
\draw[-] (20,9) to (21,10);
\draw[-] (22,6) to (23,5);
\draw[-] (22,11) to (23,12);
\draw[-] (22,14) to (23,13);
\draw[-] (22,15) to (23,16);
\draw[-] (22,18) to (23,17);
\draw[-] (22,19) to (23,20);
\draw[-] (24,1) to (25,2);
\draw[-] (24,4) to (25,3);
\draw[-] (24,20) to (25,19);
\draw[-] (26,2) to (27,1);

\node at (19.5,4.5) {$\oplus$};
\node at (19.5,8.5) {$\oplus$};
\node at (21.5,6.5) {$\ominus$};
\node at (21.5,10.5) {$\ominus$};
\node at (21.5,14.5) {$\ominus$};
\node at (21.5,18.5) {$\oplus$};
\node at (23.5,0.5) {$\oplus$};
\node at (23.5,4.5) {$\oplus$};
\node at (23.5,12.5) {$\oplus$};
\node at (23.5,16.5) {$\ominus$};
\node at (23.5,20.5) {$\ominus$};
\node at (25.5,2.5) {$\ominus$};
\node at (25.5,18.5) {$\oplus$};
\node at (27.5,0.5) {$\oplus$};

\node at (24,-1) {$b=0101$};

\draw[-] (36,5) to (37,6);
\draw[-] (36,8) to (37,7);
\draw[-] (36,9) to (37,10);
\draw[-] (38,6) to (39,5);
\draw[-] (38,11) to (39,12);
\draw[-] (38,14) to (39,13);
\draw[-] (38,15) to (39,16);
\draw[-] (38,18) to (39,17);
\draw[-] (38,19) to (39,20);
\draw[-] (40,1) to (41,2);
\draw[-] (40,4) to (41,3);
\draw[-] (40,20) to (41,19);
\draw[-] (42,2) to (43,1);

\node at (35.5,4.5) {$\oplus$};
\node at (35.5,8.5) {$\ominus$};
\node at (37.5,6.5) {$\oplus$};
\node at (37.5,10.5) {$\ominus$};
\node at (37.5,14.5) {$\ominus$};
\node at (37.5,18.5) {$\oplus$};
\node at (39.5,0.5) {$\oplus$};
\node at (39.5,4.5) {$\ominus$};
\node at (39.5,12.5) {$\oplus$};
\node at (39.5,16.5) {$\ominus$};
\node at (39.5,20.5) {$\ominus$};
\node at (41.5,2.5) {$\oplus$};
\node at (41.5,18.5) {$\oplus$};
\node at (43.5,0.5) {$\ominus$};

\node at (40,-1) {$b=0111$};
\end{tikzpicture}
\end{figure}

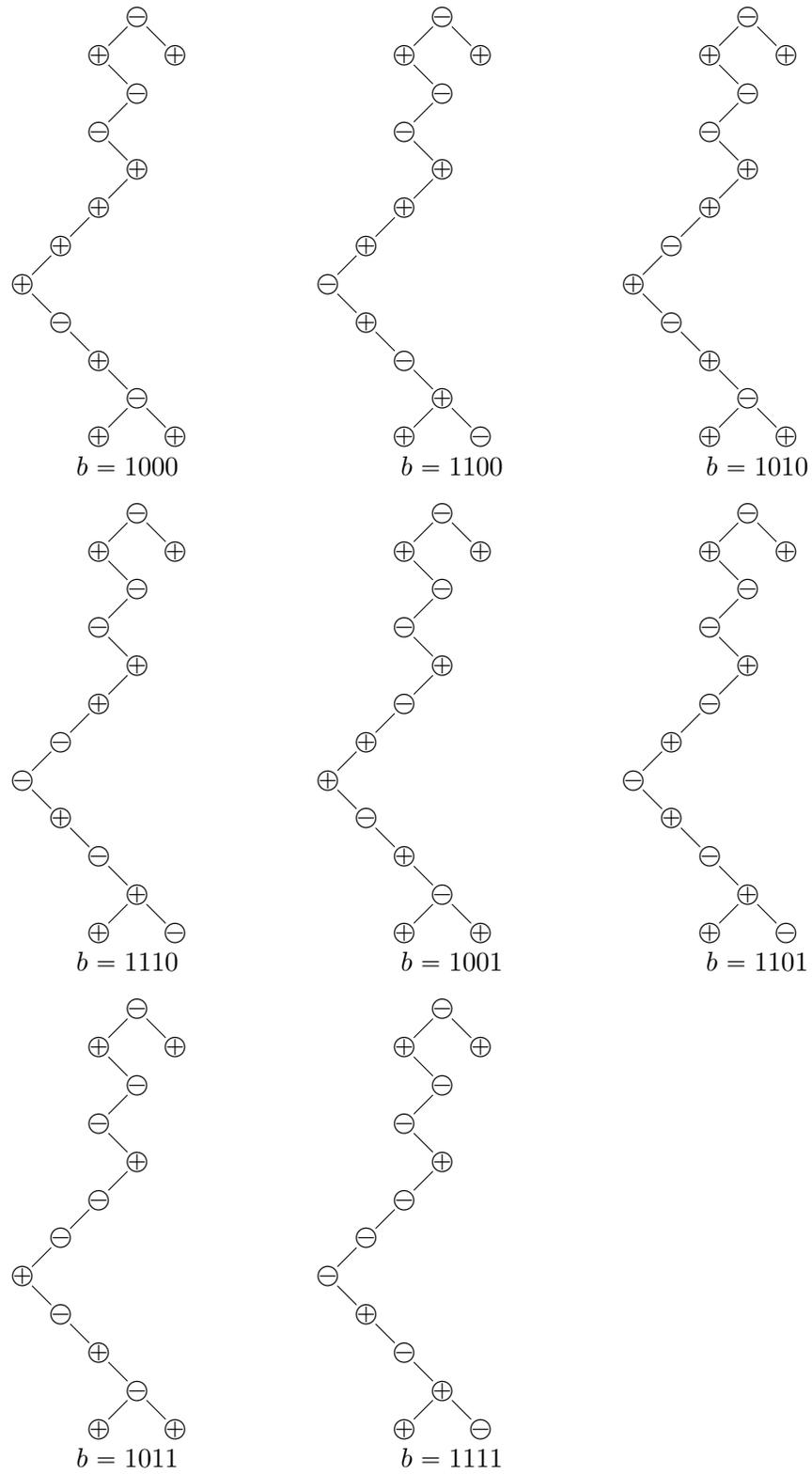
\begin{figure}
\begin{tikzpicture}[scale=0.27]
\draw[-] (2,60) to (3,59);
\draw[-] (2,61) to (3,62);
\draw[-] (4,58) to (5,57);
\draw[-] (4,63) to (5,64);
\draw[-] (6,53) to (7,54);
\draw[-] (6,56) to (7,55);
\draw[-] (6,65) to (7,66);
\draw[-] (6,68) to (7,67);
\draw[-] (6,69) to (7,70);
\draw[-] (6,72) to (7,71);
\draw[-] (6,73) to (7,74);
\draw[-] (8,54) to (9,53);
\draw[-] (8,74) to (9,73);

\node at (1.5,60.5) {$\oplus$};
\node at (3.5,58.5) {$\ominus$};
\node at (3.5,62.5) {$\oplus$};
\node at (5.5,52.5) {$\oplus$};
\node at (5.5,56.5) {$\oplus$};
\node at (5.5,64.5) {$\oplus$};
\node at (5.5,68.5) {$\ominus$};
\node at (5.5,72.5) {$\oplus$};
\node at (7.5,54.5) {$\ominus$};
\node at (7.5,66.5) {$\oplus$};
\node at (7.5,70.5) {$\ominus$};
\node at (7.5,74.5) {$\ominus$};
\node at (9.5,52.5) {$\oplus$};
\node at (9.5,72.5) {$\oplus$};

\node at (7,51) {$b=1000$};

\draw[-] (18,60) to (19,59);
\draw[-] (18,61) to (19,62);
\draw[-] (20,58) to (21,57);
\draw[-] (20,63) to (21,64);
\draw[-] (22,53) to (23,54);
\draw[-] (22,56) to (23,55);
\draw[-] (22,65) to (23,66);
\draw[-] (22,68) to (23,67);
\draw[-] (22,69) to (23,70);
\draw[-] (22,72) to (23,71);
\draw[-] (22,73) to (23,74);
\draw[-] (24,54) to (25,53);
\draw[-] (24,74) to (25,73);

\node at (17.5,60.5) {$\ominus$};
\node at (19.5,58.5) {$\oplus$};
\node at (19.5,62.5) {$\oplus$};
\node at (21.5,52.5) {$\oplus$};
\node at (21.5,56.5) {$\ominus$};
\node at (21.5,64.5) {$\oplus$};
\node at (21.5,68.5) {$\ominus$};
\node at (21.5,72.5) {$\oplus$};
\node at (23.5,54.5) {$\oplus$};
\node at (23.5,66.5) {$\oplus$};
\node at (23.5,70.5) {$\ominus$};
\node at (23.5,74.5) {$\ominus$};
\node at (25.5,52.5) {$\ominus$};
\node at (25.5,72.5) {$\oplus$};

\node at (24,51) {$b=1100$};

\draw[-] (34,60) to (35,59);
\draw[-] (34,61) to (35,62);
\draw[-] (36,58) to (37,57);
\draw[-] (36,63) to (37,64);
\draw[-] (38,53) to (39,54);
\draw[-] (38,56) to (39,55);
\draw[-] (38,65) to (39,66);
\draw[-] (38,68) to (39,67);
\draw[-] (38,69) to (39,70);
\draw[-] (38,72) to (39,71);
\draw[-] (38,73) to (39,74);
\draw[-] (40,54) to (41,53);
\draw[-] (40,74) to (41,73);

\node at (33.5,60.5) {$\oplus$};
\node at (35.5,58.5) {$\ominus$};
\node at (35.5,62.5) {$\ominus$};
\node at (37.5,52.5) {$\oplus$};
\node at (37.5,56.5) {$\oplus$};
\node at (37.5,64.5) {$\oplus$};
\node at (37.5,68.5) {$\ominus$};
\node at (37.5,72.5) {$\oplus$};
\node at (39.5,54.5) {$\ominus$};
\node at (39.5,66.5) {$\oplus$};
\node at (39.5,70.5) {$\ominus$};
\node at (39.5,74.5) {$\ominus$};
\node at (41.5,52.5) {$\oplus$};
\node at (41.5,72.5) {$\oplus$};

\node at (40,51) {$b=1010$};

\draw[-] (2,34) to (3,33);
\draw[-] (2,35) to (3,36);
\draw[-] (4,32) to (5,31);
\draw[-] (4,37) to (5,38);
\draw[-] (6,27) to (7,28);
\draw[-] (6,30) to (7,29);
\draw[-] (6,39) to (7,40);
\draw[-] (6,42) to (7,41);
\draw[-] (6,43) to (7,44);
\draw[-] (6,46) to (7,45);
\draw[-] (6,47) to (7,48);
\draw[-] (8,28) to (9,27);
\draw[-] (8,48) to (9,47);

\node at (1.5,34.5) {$\ominus$};
\node at (3.5,32.5) {$\oplus$};
\node at (3.5,36.5) {$\ominus$};
\node at (5.5,26.5) {$\oplus$};
\node at (5.5,30.5) {$\ominus$};
\node at (5.5,38.5) {$\oplus$};
\node at (5.5,42.5) {$\ominus$};
\node at (5.5,46.5) {$\oplus$};
\node at (7.5,28.5) {$\oplus$};
\node at (7.5,40.5) {$\oplus$};
\node at (7.5,44.5) {$\ominus$};
\node at (7.5,48.5) {$\ominus$};
\node at (9.5,26.5) {$\ominus$};
\node at (9.5,46.5) {$\oplus$};

\node at (7,25) {$b=1110$};

\draw[-] (18,34) to (19,33);
\draw[-] (18,35) to (19,36);
\draw[-] (20,32) to (21,31);
\draw[-] (20,37) to (21,38);
\draw[-] (22,27) to (23,28);
\draw[-] (22,30) to (23,29);
\draw[-] (22,39) to (23,40);
\draw[-] (22,42) to (23,41);
\draw[-] (22,43) to (23,44);
\draw[-] (22,46) to (23,45);
\draw[-] (22,47) to (23,48);
\draw[-] (24,28) to (25,27);
\draw[-] (24,48) to (25,47);

\node at (17.5,34.5) {$\oplus$};
\node at (19.5,32.5) {$\ominus$};
\node at (19.5,36.5) {$\oplus$};
\node at (21.5,26.5) {$\oplus$};
\node at (21.5,30.5) {$\oplus$};
\node at (21.5,38.5) {$\ominus$};
\node at (21.5,42.5) {$\ominus$};
\node at (21.5,46.5) {$\oplus$};
\node at (23.5,28.5) {$\ominus$};
\node at (23.5,40.5) {$\oplus$};
\node at (23.5,44.5) {$\ominus$};
\node at (23.5,48.5) {$\ominus$};
\node at (25.5,26.5) {$\oplus$};
\node at (25.5,46.5) {$\oplus$};

\node at (24,25) {$b=1001$};

\draw[-] (34,34) to (35,33);
\draw[-] (34,35) to (35,36);
\draw[-] (36,32) to (37,31);
\draw[-] (36,37) to (37,38);
\draw[-] (38,27) to (39,28);
\draw[-] (38,30) to (39,29);
\draw[-] (38,39) to (39,40);
\draw[-] (38,42) to (39,41);
\draw[-] (38,43) to (39,44);
\draw[-] (38,46) to (39,45);
\draw[-] (38,47) to (39,48);
\draw[-] (40,28) to (41,27);
\draw[-] (40,48) to (41,47);

\node at (33.5,34.5) {$\ominus$};
\node at (35.5,32.5) {$\oplus$};
\node at (35.5,36.5) {$\oplus$};
\node at (37.5,26.5) {$\oplus$};
\node at (37.5,30.5) {$\ominus$};
\node at (37.5,38.5) {$\ominus$};
\node at (37.5,42.5) {$\ominus$};
\node at (37.5,46.5) {$\oplus$};
\node at (39.5,28.5) {$\oplus$};
\node at (39.5,40.5) {$\oplus$};
\node at (39.5,44.5) {$\ominus$};
\node at (39.5,48.5) {$\ominus$};
\node at (41.5,26.5) {$\ominus$};
\node at (41.5,46.5) {$\oplus$};

\node at (40,25) {$b=1101$};

\draw[-] (2,8) to (3,7);
\draw[-] (2,9) to (3,10);
\draw[-] (4,6) to (5,5);
\draw[-] (4,11) to (5,12);
\draw[-] (6,1) to (7,2);
\draw[-] (6,4) to (7,3);
\draw[-] (6,13) to (7,14);
\draw[-] (6,16) to (7,15);
\draw[-] (6,17) to (7,18);
\draw[-] (6,20) to (7,19);
\draw[-] (6,21) to (7,22);
\draw[-] (8,2) to (9,1);
\draw[-] (8,22) to (9,21);

\node at (1.5,8.5) {$\oplus$};
\node at (3.5,6.5) {$\ominus$};
\node at (3.5,10.5) {$\ominus$};
\node at (5.5,0.5) {$\oplus$};
\node at (5.5,4.5) {$\oplus$};
\node at (5.5,12.5) {$\ominus$};
\node at (5.5,16.5) {$\ominus$};
\node at (5.5,20.5) {$\oplus$};
\node at (7.5,2.5) {$\ominus$};
\node at (7.5,14.5) {$\oplus$};
\node at (7.5,18.5) {$\ominus$};
\node at (7.5,22.5) {$\ominus$};
\node at (9.5,0.5) {$\oplus$};
\node at (9.5,20.5) {$\oplus$};

\node at (7,-1) {$b=1011$};

\draw[-] (18,8) to (19,7);
\draw[-] (18,9) to (19,10);
\draw[-] (20,6) to (21,5);
\draw[-] (20,11) to (21,12);
\draw[-] (22,1) to (23,2);
\draw[-] (22,4) to (23,3);
\draw[-] (22,13) to (23,14);
\draw[-] (22,16) to (23,15);
\draw[-] (22,17) to (23,18);
\draw[-] (22,20) to (23,19);
\draw[-] (22,21) to (23,22);
\draw[-] (24,2) to (25,1);
\draw[-] (24,22) to (25,21);

\node at (17.5,8.5) {$\ominus$};
\node at (19.5,6.5) {$\oplus$};
\node at (19.5,10.5) {$\ominus$};
\node at (21.5,0.5) {$\oplus$};
\node at (21.5,4.5) {$\ominus$};
\node at (21.5,12.5) {$\ominus$};
\node at (21.5,16.5) {$\ominus$};
\node at (21.5,20.5) {$\oplus$};
\node at (23.5,2.5) {$\oplus$};
\node at (23.5,14.5) {$\oplus$};
\node at (23.5,18.5) {$\ominus$};
\node at (23.5,22.5) {$\ominus$};
\node at (25.5,0.5) {$\ominus$};
\node at (25.5,20.5) {$\oplus$};

\node at (24,-1) {$b=1111$};
\end{tikzpicture}
\caption{Images of $(T,b)$ under the map $\rho$.}\label{di-sk--1}
\end{figure}

\begin{figure}
\begin{tikzpicture}[scale=0.27]
\draw[-] (2,3) to (3,4);
\draw[-] (4,5) to (5,6);
\draw[-] (6,7) to (7,8);
\draw[-] (6,10) to (7,9);
\draw[-] (6,11) to (7,12);
\draw[-] (8,8) to (9,7);
\draw[-] (8,12) to (9,11);
\draw[-] (10,6) to (11,5);
\draw[-] (12,4) to (13,3);
\draw[-] (8,5) to (9,6);
\draw[-] (6,3) to (7,4);
\draw[-] (6,2) to (7,1);

\node at (3.5,4.5) {$\oplus$};
\node at (1.5,2.5) {$\oplus$};
\node at (5.5,6.5) {$\oplus$};
\node at (5.5,10.5) {$\oplus$};
\node at (7.5,8.5) {$\ominus$};
\node at (7.5,12.5) {$\ominus$};
\node at (9.5,6.5) {$\oplus$};
\node at (9.5,10.5) {$\oplus$};
\node at (11.5,4.5) {$\ominus$};
\node at (13.5,2.5) {$\oplus$};
\node at (7.5,4.5) {$\oplus$};
\node at (5.5,2.5) {$\ominus$};
\node at (7.5,.5) {$\oplus$};

\node at (1,3.6) {$2$};
\node at (3,5.6) {$3$};
\node at (5,7.6) {$4$};
\node at (5,11.6) {$1$};
\node at (8,13.6) {$12$};
\node at (8,9.6) {$5$};
\node at (8,5.6) {$8$};
\node at (8,1.6) {$7$};
\node at (5,3.6) {$6$};
\node at (10,7.6) {$9$};
\node at (10,11.6) {$13$};
\node at (12,5.6) {$10$};
\node at (14,3.6) {$11$};

\node at (0,9) {$S=$};
\node at (17.5,9) {$\rho^{-1} \atop \longrightarrow$};

\node at (21,9) {$T=$};
\node at (22,4.5) {$\oplus$};
\node at (24,6.5) {$\oplus$};
\node at (26,8.5) {$\oplus$};
\node at (28,10.5) {$\oplus$};
\node at (30,12.5) {$\oplus$};
\node at (32,14.5) {$\ominus$};
\node at (30,8.5) {$\oplus$};
\node at (32,6.5) {$\ominus$};
\node at (34,4.5) {$\oplus$};
\node at (32,10.5) {$\ominus$};
\node at (28,6.5) {$\oplus$};
\node at (34,12.5) {$\oplus$};
\node at (41,9) {$b=110$};
\node at (37,8.5) {,};

\draw[-] (22.5,5) to (23.5,6);
\draw[-] (24.5,7) to (25.5,8);
\draw[-] (26.5,9) to (27.5,10);
\draw[-] (28.5,11) to (29.5,12);
\draw[-] (30.5,13) to (31.5,14);
\draw[-] (30.5,12) to (31.5,11);
\draw[-] (32.5,14) to (33.5,13);
\draw[-] (28.5,7) to (29.5,8);
\draw[-] (30.5,9) to (31.5,10);
\draw[-] (30.5,8) to (31.5,7);
\draw[-] (32.5,6) to (33.5,5);
\end{tikzpicture}
\caption{The preimage of $S$ under the map $\rho$.} \label{di-sk--2}
\end{figure}
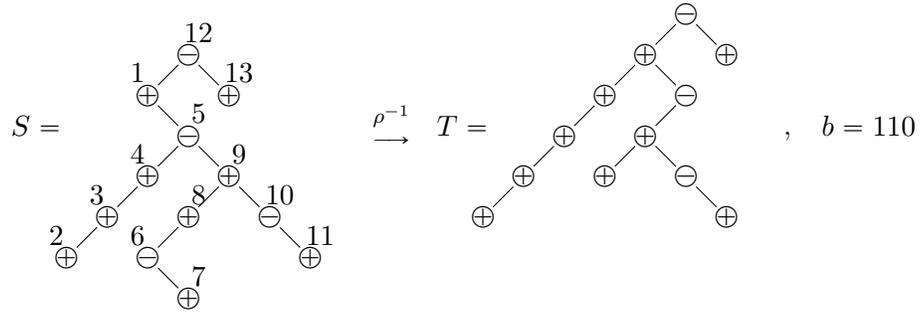

\section{Final remarks}
Several problems arising from this work merit further study.

First, recall the algebraic proofs of Theorems~\ref{thm:Sch} and \ref{thm:litSch} via Riordan arrays that we have outlined in the Introduction. This approach is fully applicable to the enumeration of Schr\"oder paths refined by two extra parameters, one of which keeps track of the number of horizontals at height $0$, while the other one counts the remaining horizontals.

For any $p\in\R$, we use $h_0(p)$, $h(p)$, and $|p|$ to denote the number of horizontals at height $0$, the number of horizontals at positive height, and the semi-length of $p$, respectively. Then the generating function of Schr\"oder paths with precisely $k$ hills is given by
\begin{align*}
x^kg(u,v;x)^{k+1} &=g(u,v;x) (xg(u,v;x))^k,
\end{align*}
where
\begin{align*}
g(u,v;x) &:=\sum_{p\in\widetilde{\R}}u^{h_0(p)}v^{h(p)}x^{|p|}=\frac{2}{1+(2+v-2u)x+\sqrt{1-(4+2v)x+v^2x^2}}
\end{align*}
is the generating function of Schr\"oder paths without hills. Therefore $(g(u,v;x),xg(u,v;x))$ is now a $(u,v)$-weighted Riordan array of Bell type. We can analogously solve for its $A$- and $Z$-sequences:
\begin{align}
A(u,v;x) &= \frac{(1-x+ux)(1-x+ux-vx)}{1-2x+ux-vx}=1+ux+\sum_{j\ge 0}(1+v)(2-u+v)^jx^{j+2},\label{uvA}\\
Z(u,v;x) &= \frac{1}{x}(A(u,v;x)-1)=u+\sum_{j\ge 0}(1+v)(2-u+v)^jx^{j+1}.\label{uvZ}
\end{align}

Thanks to the addition of parameters $u$ and $v$, this $(g(u,v;x),xg(u,v;x))$ encapsulates quite a few Riordan arrays as special cases. See Table~\ref{uv-specialization} for some examples. But the somewhat unexpected presence of the term $-u$ in both \eqref{uvA} and \eqref{uvZ} makes it an intriguing problem to find bijective proofs of the following recurrences implied by them. 
\begin{theorem}
Let $r_{u,v}(0,0)=1$, then we have for $n\ge 1$,
\begin{align}
r_{u,v}(n,0)&=ur_{u,v}(n-1,0)+(1+v)\sum\limits_{j=0}^{n-2}(2-u+v)^{j}r_{u,v}(n-1,j+1),\label{uvZ-rec}
\end{align}
and for $1\le k\le n$,
\begin{align}
r_{u,v}(n,k)&=r_{u,v}(n-1,k-1)+ur_{u,v}(n-1,k)+(1+v)\sum\limits_{j=k}^{n-2}(2-u+v)^{j-k}r_{u,v}(n-1,j+1),\label{uvA-rec}
\end{align}
where $r_{u,v}(n,k)$ is a polynomial in $u,v$ and also the coefficient of $x^n$ in $x^kg(u,v;x)^{k+1}$. Combinatorially, it is the $(u,v)$-weighted counting of the Schr\"oder paths of length $2n$ with $k$ hills.
\end{theorem}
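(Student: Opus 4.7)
The plan is to apply the $A$- and $Z$-sequence characterization of Bell-type Riordan arrays, exactly as outlined in the Introduction for Theorems~\ref{thm:Sch} and \ref{thm:litSch}. First I would verify that $[r_{u,v}(n,k)]_{n,k\ge 0}$ is the $(u,v)$-weighted Riordan array $(g(u,v;x),xg(u,v;x))$ in the Bell subgroup: the unique decomposition $p=p_1UDp_2\cdots p_kUDp_{k+1}$ of any Schr\"oder path with $k$ hills into hill-free blocks $p_i\in\widetilde{\R}$ shows that the $k$th column generating function is $x^kg(u,v;x)^{k+1}=g(u,v;x)\cdot(xg(u,v;x))^k$. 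The closed form for $g(u,v;x)$ in turn follows from a first-return decomposition: a hill-free path is empty, or begins with a height-$0$ horizontal (weight $u$) followed by a hill-free path, or has the shape $Up_1Dp_2$ with $p_1$ \emph{non-empty} (so $Up_1D$ is not a hill) and $p_2\in\widetilde{\R}$, where the interior $p_1$ is an arbitrary Schr\"oder path whose horizontals all sit at positive height and are hence weighted by $v$. This gives
\[
g(u,v;x)=1+uxg(u,v;x)+x\bigl(G(v;x)-1\bigr)g(u,v;x),
\]
with $G(v;x)$ satisfying $G=1+vxG+xG^2$, and eliminating $G$ yields the stated closed form.

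Since the array is of Bell type, the theorem reduces to verifying \eqref{uvA} and \eqref{uvZ}: the standard Bell-type identities
\[
r_{u,v}(n,k)=\sum_{m\ge 0}A_m\,r_{u,v}(n-1,k-1+m),\qquad r_{u,v}(n,0)=\sum_{m\ge 0}Z_m\,r_{u,v}(n-1,m),
\]
combined with reindexing $j=k+m-2$ in the tail $m\ge 2$, immediately produce \eqref{uvA-rec} and \eqref{uvZ-rec} from the expansions
\[
A_0=1,\quad A_1=u,\quad A_m=(1+v)(2-u+v)^{m-2}\ (m\ge 2),\qquad Z_{m-1}=A_m\ (m\ge 1).
\]
To compute $A(u,v;x)=x/\Rev(xg(u,v;x))$, I would show that with $y=xg(u,v;x)$ one has
\[
x=\frac{y\bigl(1-(2-u+v)y\bigr)}{\bigl(1-(1-u)y\bigr)\bigl(1-(1-u+v)y\bigr)},
\]
an algebraic identity obtained directly from the polynomial relation satisfied by $g(u,v;x)$ after clearing the square root, and $Z$ then follows from the Bell-subgroup relation $Z(u,v;x)=(A(u,v;x)-1)/x$.

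The main obstacle is the algebraic collapse implicit in the previous paragraph: expanding
\[
A(u,v;x)=\frac{(1-x+ux)(1-x+ux-vx)}{1-(2-u+v)x}
\]
as a power series hinges on the identity
\[
(2-u+v)^2-(2-2u+v)(2-u+v)+(1-u)(1-u+v)=1+v,
\]
which is precisely what forces the rational expression to telescope into a single clean geometric tail with common ratio $2-u+v$ and leading factor $1+v$; everything else is routine bookkeeping. A bijective explanation in the spirit of Section~\ref{sec:comb pf} is genuinely elusive, since the negative contribution $-u$ in both $A(u,v;x)$ and $Z(u,v;x)$ hints at a signed inclusion-exclusion whose combinatorial meaning is not transparent from the path structure, which is exactly the problem the authors flag as intriguing.
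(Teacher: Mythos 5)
Your proposal is correct and follows essentially the same route as the paper: establish that $[r_{u,v}(n,k)]$ is the Bell-type Riordan array $(g(u,v;x),xg(u,v;x))$ via the hill decomposition, compute $A(u,v;x)=x/\Rev(xg(u,v;x))$ and $Z(u,v;x)=(A(u,v;x)-1)/x$, and read off the recurrences from the coefficient expansions \eqref{uvA}--\eqref{uvZ}. In fact you supply more detail than the paper does (the first-return functional equation for $g$, the explicit form of the compositional inverse, and the identity $(2-u+v)^2-(2-2u+v)(2-u+v)+(1-u)(1-u+v)=1+v$ that makes the tail geometric), all of which checks out.
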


\begin{table}
{\small
\begin{tabular}{cccc}
\toprule
$(u,v)$ & $(g(x),f(x))$ & in OEIS? & row sums\\
\midrule
&&&\\
$(1,1)$ & $(\dfrac{2}{1+x+\sqrt{1-6x+x^2}},\dfrac{2x}{1+x+\sqrt{1-6x+x^2}})$ & \href{http://oeis.org/A104219}{A104219} & $1,2,6,22,90,\cdots$ \href{http://oeis.org/A006318}{A006318}\\
&&&\\
$(0,1)$ & $(\dfrac{2}{1+3x+\sqrt{1-6x+x^2}},\dfrac{2x}{1+3x+\sqrt{1-6x+x^2}})$ & \href{http://oeis.org/A114709}{A114709} & $1,1,3,11,45,\cdots$ \href{http://oeis.org/A001003}{A001003}\\
&&&\\
$(-1,1)$ & $(\dfrac{2}{1+5x+\sqrt{1-6x+x^2}},\dfrac{2x}{1+5x+\sqrt{1-6x+x^2}})$ & new & $1,0,2,6,26,\cdots$ \href{http://oeis.org/A114710}{A114710}\\
&&&\\
$(2,1)$ & $(\dfrac{2}{1-x+\sqrt{1-6x+x^2}},\dfrac{2x}{1-x+\sqrt{1-6x+x^2}})$ & \href{http://oeis.org/A080247}{A080247} & $1,3,11,45,197,\cdots$ \href{http://oeis.org/A001003}{A001003}\\
&&&\\
$(1,0)$ & $(\dfrac{2}{1+\sqrt{1-4x}},\dfrac{2x}{1+\sqrt{1-4x}})$ & \href{http://oeis.org/A033184}{A033184} & $1,2,5,14,42,\cdots$ \href{http://oeis.org/A000108}{A000108}\\
&&&\\
$(0,0)$ & $(\dfrac{2}{1+2x+\sqrt{1-4x}},\dfrac{2x}{1+2x+\sqrt{1-4x}})$ & \href{http://oeis.org/A065600}{A065600} & $1,1,2,5,14,\cdots$ \href{http://oeis.org/A000108}{A000108}\\
&&&\\
$(-1,0)$ & $(\dfrac{2}{1+4x+\sqrt{1-4x}},\dfrac{2x}{1+4x+\sqrt{1-4x}})$ & new & $1,0,1,2,6,\cdots$ \href{http://oeis.org/A000957}{A000957}\\
&&&\\
$(2,0)$ & $(\dfrac{2}{1-2x+\sqrt{1-4x}},\dfrac{2x}{1-2x+\sqrt{1-4x}})$ & \href{http://oeis.org/A039598}{A039598} & $1,3,10,35,126,\cdots$ \href{http://oeis.org/A001700}{A001700}\\
&&&\\
$(1,-1)$ & $(\dfrac{1}{1-x},\dfrac{x}{1-x})$ & \href{http://oeis.org/A007318}{A007318} & $1,2,4,8,16,\cdots$ \href{http://oeis.org/A000079}{A000079}\\
&&&\\
$(0,-1)$ & $(1,x)$ & \href{http://oeis.org/A010054}{A010054} & $1,1,1,1,1,\cdots$ \href{http://oeis.org/A000012}{A000012}\\
&&&\\
$(-1,-1)$ & $(\dfrac{1}{1+x},\dfrac{x}{1+x})$ & \href{http://oeis.org/A130595}{A130595} & $1,0,0,0,0,\cdots$ \href{http://oeis.org/A000007}{A000007}\\
&&&\\
$(2,-1)$ & $(\dfrac{1}{1-2x},\dfrac{x}{1-2x})$ & \href{http://oeis.org/A038207}{A038207} & $1,3,9,27,81,\cdots$ \href{http://oeis.org/A000244}{A000244}\\
&&&\\
$(1,2)$ & $(\dfrac{2}{1+2x+\sqrt{1-8x+4x^2}},\dfrac{2x}{1+2x+\sqrt{1-8x+4x^2}})$ & new & $1,2,7,32,166,\cdots$ \href{http://oeis.org/A108524}{A108524}\\
&&&\\
$(0,2)$ & $(\dfrac{2}{1+4x+\sqrt{1-8x+4x^2}},\dfrac{2x}{1+4x+\sqrt{1-8x+4x^2}})$ & new & $1,1,4,19,100,\cdots$ \href{http://oeis.org/A007564}{A007564}\\
&&&\\
$(-1,2)$ & $(\dfrac{2}{1+6x+\sqrt{1-8x+4x^2}},\dfrac{2x}{1+6x+\sqrt{1-8x+4x^2}})$ & new & $1,0,3,12,66,\cdots$ new\\
&&&\\
$(2,2)$ & $(\dfrac{2}{1+\sqrt{1-8x+4x^2}},\dfrac{2x}{1+\sqrt{1-8x+4x^2}})$ & new & $1,3,12,57,300,\cdots$ \href{http://oeis.org/A047891}{A047891}\\
&&&\\
\bottomrule
\end{tabular}
}
\caption{Specializations of $(u,v)$, Riordan arrays, and their row sums.}
\label{uv-specialization}
\end{table}

Next, we make some comments on the part (ii) of Problem \ref{find stat}. In Sections~\ref{sec:comb pf} and \ref{sec:comb pf sep}, we have constructed bijections to interpret the same recurrences \eqref{sch-Z}--\eqref{sch-A} and \eqref{sep-Z}--\eqref{sep-A}, via Schr\"oder paths and di-sk trees, respectively. Moreover, the set of binary sequences $\B_k$ and the set of binary numbers $\I_k$ are in natural bijection with each other. This means we get for free, a recursively defined bijection between Schr\"oder paths of length $2n$ and di-sk trees with $n$ nodes, such that the number of hills is sent to the index of the first $\ominus$-node.
$$\R_{n,k}\setminus\R_{n-1,k-1}\stackrel{\Phi}{\longleftrightarrow}\bigcup_{j=k}^{n-1}\R_{n-1,j}\times\B_{j-k}\longleftrightarrow\bigcup_{j=k}^{n-1}\DT_{n,j+1}\times\I_{j-k}\stackrel{\rho}{\longleftrightarrow}\DT_{n+1,k+1}\setminus\DT_{n,k}$$
When combined with the bijection $\eta$ in Theorem~\ref{FLZ-bij}, this already gives an answer to Problem~\ref{find stat} (ii). Although finding a more direct bijection is still appealing.

On the other hand, with the aid of certain leaf-marked plane rooted trees, Claesson, Kitaev and Steingr\'imsson constructed two bijections between Schr\"oder paths of length $2n$ and separable permutations of length $n+1$. The second of their bijections pays special attention to the number of hills.

\begin{theorem}[cf. Theorem~2.2.48 in \cite{Kit}]\label{thm:CKS}
There is a bijection between the separable permutations
in $\mathfrak{S}_{n+1}(2413,3142)$ and the Schr\"oder paths of length $2n$ such that the statistic $\comp$ on permutations corresponds to
$\comp_s$ on paths.
\end{theorem}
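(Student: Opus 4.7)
The plan is to build the bijection recursively by matching the canonical decompositions of separable permutations and Schr\"oder paths, and then reading off $\comp$ and $\comp_s$ from these matched decompositions. On the permutation side, every $\pi \in \mathfrak{S}_{n+1}(2413, 3142)$ factors uniquely as a direct sum $\pi = \sigma_1 \oplus \cdots \oplus \sigma_c$ of $\oplus$-indecomposable blocks, where each $\sigma_i$ is either a singleton or skew-reducible, and $\comp(\pi) = c$ is the number of blocks. On the path side, every Schr\"oder path $p$ of length $2n$ splits at its horizontals at height $0$ into $c$ pieces $p = p_0\, H\, p_1\, H \cdots H\, p_{c-1}$, where each $p_i$ is a (possibly empty) Schr\"oder path with no horizontal at height $0$, and $\comp_s(p) = c$ counts these pieces.

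Next I would set up a sub-bijection $B$ between $\oplus$-indecomposable separable permutations of length $m$ and Schr\"oder paths of length $2(m-1)$ with no horizontal at height $0$, and assemble the global map
\begin{align*}
\Theta(\sigma_1 \oplus \cdots \oplus \sigma_c) = B(\sigma_1)\, H\, B(\sigma_2)\, H \cdots H\, B(\sigma_c),
\end{align*}
which sends one direct-sum summand to one height-$0$ block, so that $\comp(\pi) = \comp_s(\Theta(\pi))$ is immediate. For a singleton $\sigma$ put $B(\sigma) = \varepsilon$; for a non-singleton $\sigma$ with skew decomposition $\sigma = \tau_1 \ominus \cdots \ominus \tau_m$ define $B(\sigma)$ recursively from the $B(\tau_j)$, using the alternating $\oplus/\ominus$ levels of the separation tree of $\sigma$. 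Following Claesson--Kitaev--Steingr\'imsson, this step is naturally encoded by leaf-marked plane rooted trees in which the elevation $U\cdot D$, the horizontal $H$, and concatenation encode the three basic constructors of the separable hierarchy.

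The main obstacle is to match the sizes correctly while keeping the inside of every elevated arch free of horizontals at height $0$, so that the path-side splitting exactly inverts the permutation-side $\oplus$-decomposition. Careful bookkeeping, for instance one ``absorbed'' leaf per elevation, is required. The inverse of $\Theta$ is then read off in the obvious way: first split $p$ at its height-$0$ horizontals, then recurse inside each elevated arch using the inverse of $B$. Termination follows from $|\tau_j| < |\sigma|$ for every skew-piece, and the separable class is preserved under both $\oplus$ and $\ominus$ throughout.

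As an alternative I would factor the construction through the di-sk tree model of Theorem~\ref{FLZ-bij}: compose $\eta : \mathfrak{S}_{n+1}(2413, 3142) \to \DT_{n+1}$ with an explicit bijection from $\DT_{n+1}$ to the set of Schr\"oder paths of length $2n$, sending a natural chain invariant of the tree (for instance, the length of the $\oplus$-chain hanging to the right of the root) to $\comp_s$ on the path. This factorization would inherit the combinatorial machinery already developed in Sections~\ref{sec:comb pf} and~\ref{sec:comb pf sep} and yield a transparent verification of the $\comp \leftrightarrow \comp_s$ correspondence.
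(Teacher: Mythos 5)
First, a point of reference: the paper does not prove Theorem~\ref{thm:CKS} at all --- it is imported verbatim from Kitaev's book (Theorem~2.2.48, due to Claesson, Kitaev and Steingr\'imsson), so there is no in-paper argument to compare against. Your outline follows the same strategy as that cited proof, and its outer layer is correct: every separable $\pi$ of length $n+1$ factors uniquely into $c=\comp(\pi)$ $\oplus$-indecomposable blocks, every Schr\"oder path of semi-length $n$ splits at its $h_0(p)$ height-$0$ horizontals into $\comp_s(p)=1+h_0(p)$ little-Schr\"oder pieces, and the sizes are consistent, since $\sum_i(|\sigma_i|-1)=(n+1)-c$ equals the total semi-length $n-(c-1)$ of the non-$H$ pieces. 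The reduction of the theorem to a sub-bijection $B$ from $\oplus$-indecomposable separable permutations of length $m$ to Schr\"oder paths of semi-length $m-1$ with no height-$0$ horizontal is also enumeratively plausible: both families are counted by the little Schr\"oder number $s(m-1)$ (compare the identity $|\s|=|\widetilde{\R}|$ and $r(n,0)=s(n)$ from the Introduction).

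The genuine gap is that $B$ is never constructed. For a non-singleton $\oplus$-indecomposable block $\sigma=\tau_1\ominus\cdots\ominus\tau_m$ you say only that $B(\sigma)$ is defined ``recursively from the $B(\tau_j)$'' and that ``careful bookkeeping, for instance one absorbed leaf per elevation, is required.'' That recursive step is the entire substance of the theorem: one must specify exactly how the skew summands (which are themselves $\ominus$-indecomposable, hence singletons or direct sums) are packed under elevated arches so that no height-$0$ horizontal is created inside a block, the semi-length drops by exactly one per block, and the whole map is invertible. A counting coincidence guarantees that \emph{some} bijection $B$ exists, but it does not produce one, and a bijective theorem is not proved until the map and its inverse are written down. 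The alternative route through di-sk trees has the same status: you would need to exhibit the bijection $\DT_{n+1}\rightarrow\R_{n}$ and verify which chain invariant tracks $\comp$ under $\eta$ (Theorem~\ref{FLZ-bij} only controls descents, hence $\iar$ via Corollary~\ref{dt=sp/iar}, not $\comp$), neither of which is done. As it stands the proposal is a correct reduction plus an appeal to the very source the paper cites, rather than an independent proof.
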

Here $\comp_s(p)=1+h_0(p)$ for each Schr\"oder path $p$, and the definition of $\comp$ is given below.
\begin{Def}
For any permutation $\pi$, $\comp(\pi)$ is defined as the number of ways to factor $\pi=\sigma\tau$, so that each letter in the non-empty $\sigma$ is smaller than any letter in $\tau$, and $\tau$ is allowed to be empty.
\end{Def}

\begin{table}[h]
\centering 
\begin{tabular}{VcVc|c|c|c|c|c|c|cV}
\Xhline{2pt}
$\pi$ & $123$ & $132$ & $213$ & $231$ & $312$ & $321$ & $2413$ & $3142$\\
\Xhline{1pt}
$\iar$ & $3$ & $2$ & $1$ & $2$ & $1$ & $1$ & $2$ & $1$\\
\hline
$\comp$ & $3$ & $2$ & $2$ & $1$ & $1$ & $1$ & $1$ & $1$\\
\Xhline{2pt}
\end{tabular}
\vspace{3mm}
\caption{Values of $\iar$ vs. $\comp$ for eight permutations.}
\label{iar-comp}
\end{table}

We compare the values of $\iar$ and $\comp$ in Table~\ref{iar-comp} for eight permutations. This table presages the following corollary, which follows from Theorems~\ref{Sep-iar} and \ref{thm:CKS}. We can also see from the last two columns in Table~\ref{iar-comp} that in general, $\iar$ and $\comp$ are {\bf not} equidistributed over the entire symmetric group.

\begin{corollary}
For all $n\ge 1$, the two permutation statistics $\iar$ and $\comp$ are equidistributed on $\mathfrak{S}_n(2413,3142)$.
\end{corollary}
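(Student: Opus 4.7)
The plan is to combine Theorem~\ref{Sep-iar} on the $\iar$-side with Theorem~\ref{thm:CKS} on the $\comp$-side, bridged by a trivial involution on Schr\"oder paths that swaps the hill statistic with the ``horizontal at height $0$'' statistic. I would carry out the argument in three short steps.

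First, by Theorem~\ref{Sep-iar} we have
$$\#\{\pi \in \mathfrak{S}_N(2413,3142) : \iar(\pi) = k\} = p(N,k) = r(N-1,k-1),$$
which equals the number of Schr\"oder paths in $\R_{N-1}$ having exactly $k-1$ hills. Second, I would introduce an involution $\alpha$ on $\R$ as follows: factor each $p \in \R$ uniquely at ground level as $p = c_1 c_2 \cdots c_m$, where every ``atom'' $c_i$ is either $H$, or $UD$, or $U q_i D$ with $q_i$ a nonempty Schr\"oder path; then $\alpha(p)$ is obtained by swapping each ground-level $H$ with $UD$ and vice versa, while leaving every elevated atom $Uq_iD$ intact. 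Clearly $\alpha$ is a length-preserving involution on $\R_n$ that exchanges the hill count with $h_0$, so $\#\{p \in \R_n : h_0(p) = j\} = r(n,j)$ for every $j$. This is essentially the same recipe used in the introduction to exhibit $|\widetilde{\R}| = |\s|$, just applied in both directions so that both statistics are swapped simultaneously.

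Third, by Theorem~\ref{thm:CKS} (with $n$ replaced by $N-1$), the CKS bijection identifies $\comp$ on $\mathfrak{S}_N(2413,3142)$ with $\comp_s = 1 + h_0$ on $\R_{N-1}$, giving
$$\#\{\pi \in \mathfrak{S}_N(2413,3142) : \comp(\pi) = k\} = \#\{p \in \R_{N-1} : h_0(p) = k-1\} = r(N-1,k-1),$$
where the last equality uses the involution $\alpha$ from the previous step. Matching this with the count from the first step produces $\#\{\pi : \iar(\pi) = k\} = \#\{\pi : \comp(\pi) = k\}$ for every $N \ge 1$ and every $k$, which is the claimed equidistribution on $\mathfrak{S}_N(2413,3142)$.

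There is no substantive obstacle here: both nontrivial inputs, Theorems~\ref{Sep-iar} and \ref{thm:CKS}, are already in hand, and the only new ingredient is the involution $\alpha$, whose verification is immediate from the ground-level factorization. The one point worth checking carefully is that the atom decomposition is truly unique and that $\alpha$ fixes every elevated atom $Uq_iD$, so that the length is preserved while both ground-level statistics are exchanged simultaneously.
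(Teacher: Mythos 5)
Your proof is correct and takes essentially the same route the paper intends: the corollary is deduced by combining Theorem~\ref{Sep-iar} with Theorem~\ref{thm:CKS}. The paper leaves the bridging step --- the equidistribution of the hill count and $h_0$ on Schr\"oder paths of a fixed length --- implicit, and your ground-level atom involution $\alpha$ supplies exactly that step correctly.
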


Finally, it is worthwhile to study the newly introduced permutation statistic $\iar$ for its own sake. In our future work \cite{FLW}, we plan to address the Wilf-equivalence problem for various classes of pattern-avoiding permutations, with $\iar$ in mind.

\section*{Acknowledgement}
We thank Sergey Kitaev and Zhicong Lin for the useful discussions. Both authors were supported by the Fundamental Research Funds for the Central Universities (No.~2018CDXYST0024).


\begin{thebibliography}{99}
\bibitem{BPPR} E. Barcucci, E. Pergola, R. Pinzani and S. Rinaldi, \textit{ECO Method and Hill-free Generalized Motzkin Paths}, S\'{e}m. Lothar. Combin., \textbf{46} (2001), Article B46b.

\bibitem{Bar} P. Barry, \textit{Riordan Arrays: A Primer}, Logic Press, Naas, Ireland (2016).



\bibitem{CLW} X.~Chen, H.~Liang and Y.~Wang, \textit{Total positivity of Riordan arrays}, European J. Combin., {\bf 46} (2015): 68--74.

\bibitem{CW} X.~Chen and Y.~Wang, \textit{Notes on the total positivity of Riordan arrays}, Linear Algebra Appl., {\bf 569} (2019): 156--161.

\bibitem{CK} A.~Claesson and S.~Kitaev, \textit{Classification of bijections between $321$-and $132$-avoiding permutations}, S\'em. Lothar. Combin., {\bf 60} (2008): B60d, 30 pp.

\bibitem{DFR} E.~Deutsch, L.~Ferrari and S.~Rinaldi, \textit{Production matrices}, Adv. in Appl. Math., {\bf 34} (2005): 101--122.


\bibitem{FLW} S.~Fu, Z.~Lin, and Y.~Wang, \textit{On the $\iar$-Wilf equivalence for Catalan and Schr\"oder permutations}, in preparation.

\bibitem{FLZ} S.~Fu, Z.~Lin, and J.~Zeng, \textit{On two unimodal descent polynomials}, Discrete Math., {\bf 341}.9 (2018): 2616--2626.

\bibitem{FZ} D.~Foata and D.~Zeilberger, \textit{A classic proof of a recurrence for a very classical sequence}, J. Combin. Theory Ser. A {\bf 80}.2 (1997): 380--384.


\bibitem{HS} T.-X.~He and R.~Sprugnoli, \textit{Sequence characterization of Riordan arrays}, Discrete Math., {\bf 309}.12 (2009): 3962--3974.

\bibitem{Kit}S.~Kitaev, \emph{Patterns in permutations and words}, Springer Science \& Business Media (2011).

\bibitem{LMMS} A.~Luz\'on, D.~Merlini, M.A.~Mor\'on and R.~Sprugnoli, \textit{Identities induced by Riordan arrays}, Linear Algebra Appl., {\bf 436} (2012): 631--647.

\bibitem{MRSV} D.~Merlini, D.G.~Rogers, R.~Sprugnoli and M.C.~Verri, \textit{On some alternative characterizations of Riordan arrays}, Canadian J. Math., {\bf 49} (1997): 301--320.

\bibitem{oeis} OEIS Foundation Inc., The On-Line Encyclopedia of Integer Sequences,  \href{http://oeis.org}{http://oeis.org}, 2011.

\bibitem{PS} E.~Pergola and~R. A. Sulanke, \textit{Schr\"oder triangles, paths, and parallelogram polyominoes}, Journal of Integer Sequences, Vol. 1 (1998), Available at http://www.research.att.com/\~{}njas/sequences/JIS/

\bibitem{Rog} D.G.~Rogers, \textit{Pascal triangles, Catalan numbers and renewal arrays}, Discrete Math., {\bf 22} (1978): 301--310.

\bibitem{Sha} L.W.~Shapiro, \textit{Bijections and the Riordan group}, Theoret. Comput. Sci. {\bf 307}.2 (2003): 403--413.

\bibitem{SGWW} L.W.~Shapiro, S.~Getu, W.-J.~Woan and L.C.~Woodson, \textit{The Riordan group}, Discrete Appl. Math., {\bf 34} (1991): 229--239.

\bibitem{SS} L.W.~Shapiro and A.B.~Stephens, \textit{Bootstrap percolation, the Schr\"oder numbers, and the $N$-kings problem}, SIAM J. Discrete Math., {\bf 4} (1991): 275--280.

\bibitem{Spr} R.~Sprugnoli, \textit{Riordan arrays and combinatorial sums}, Discrete Math. \textbf{132} (1994): 267--290.


\bibitem{StaEC2} R.P.~Stanley, Enumerative Combinatorics, vol. 2, Cambridge Univ. Press, Cambridge (1999).

\bibitem{Sul} R.~A.~Sulanke, \textit{Bijective recurrences concerning Schr\"oder paths}, Electron. J. Combin. {\bf 5} (1998): Research Paper 47, 11pp.

\bibitem{Wes} J.~West, \textit{Generating trees and the Catalan and Schr\"oder numbers}, Discrete Math. {\bf 146} (1995): 247--262.

\bibitem{Zhu} B.-X. Zhu, \textit{Log-concavity and strong $q$-log-convexity for Riordan arrays and recursive matrices}, Proc. Roc. Soc. Edinburgh Sect. A, {\bf 147} (2017): 1297--1310.

\end{thebibliography}
\end{document}